\theoremstyle{plain}
\newtheorem{theorem}[equation]{Theorem}
\newtheorem*{theorem*}{Theorem}
\newtheorem*{conjecture*}{Conjecture}
\newtheorem*{prop*}{Proposition}
\newtheorem{lemma}[equation]{Lemma}
\newtheorem*{lemma*}{Lemma}
\newtheorem{cor}[equation]{Corollary}
\newtheorem*{cor*}{Corollary}
\newtheorem*{fq*}{Main question}
\theoremstyle{definition}
\newtheorem*{defn*}{Definition}
\newtheorem{remark}[equation]{Remark}
\newtheorem*{remark*}{Remark}
\newtheorem{example}[equation]{Example}
\newtheorem*{notation*}{Notation}
\newcommand{\bsl}{\backslash}
\newcommand{\Fq}{\mathbf F_q}
\newcommand{\id}[1]{\mathrm{id}_{#1}}
\newcommand{\inv}{^{-1}}
\newcommand{\irr}{\mathrm{Irr}}
\newcommand{\mat}[4]{
    \begin{pmatrix}
      #1 & #2 \\
      #3 & #4
    \end{pmatrix}
}
\renewcommand{\P}{\mathbf P}
\newcommand{\Z}{\mathbf Z}
\DeclareMathOperator{\Aut}{Aut}
\DeclareMathOperator{\End}{End}
\DeclareMathOperator{\Hom}{Hom}
\title{Orbits of pairs in abelian groups}
\author{C. P. Anilkumar}
\author{Amritanshu Prasad}
\address{The Institute of Mathematical Sciences, Chennai.}
\keywords{Finite abelian groups, automorphism orbits, modules over discrete valuation rings.}
\subjclass[2010]{20K01, 20K30, 05E15}
\begin{document}
\maketitle
\begin{abstract}
  We compute the number of orbits of pairs in a finitely generated torsion module (more generally, a module of bounded order) over a discrete valuation ring.
  The answer is found to be a polynomial in the cardinality of the residue field whose coefficients are integers which depend only on the elementary divisors of the module, and not on the ring in question.
  The coefficients of these polynomials are conjectured to be non-negative integers.
\end{abstract}
\section{Introduction}
Let $R$ be a discrete valuation ring with maximal ideal $P$ generated by a unifomizing element $\pi$ and residue field $k=R/P$.
An $R$-module $M$ is said to be of bounded order if $P^NM=0$ for some positive integer $N$.
Let $\Lambda$ denote the set of all sequences of the form 
\begin{equation}
  \label{eq:lambda}
  \lambda = (\lambda_1^{m_1},\lambda_2^{m_2},\dotsc,\lambda_l^{m_l}),
\end{equation}
where $\lambda_1>\lambda_2>\dotsc>\lambda_l$ is a strictly decreasing sequence of positive integers and $m_1,m_2,\dotsc,m_l$ are non-zero cardinal numbers.
We allow the case where $l=0$, resulting in the empty sequence, which we denote by $\emptyset$.
Every $R$-module of bounded order is, up to isomorphism, of the form
\begin{equation}
  \label{eq:module-form}
  M_\lambda = (R/P^{\lambda_1})^{\oplus m_1}\oplus\dotsc \oplus (R/P^{\lambda_l})^{\oplus m_l} 
\end{equation}
for a unique $\lambda\in \Lambda$.
We will at times, wish to restrict ourselves to those $\lambda\in \Lambda$ for which all the cardinals $m_1,m_2,\dotsc,m_l$ are finite. We denote by $\Lambda_0$ this subset of $\Lambda$, which is the set of all partitions.
The $R$-module $M_\lambda$ is of finite length if and only if $\lambda\in \Lambda_0$.

Fix $\lambda\in \Lambda$, and write $M$ for $M_\lambda$.
Let $G$ denote the group of $R$-module automorphisms of $M$.
Then $G$ acts of $M^n$ by the diagonal action:
\begin{equation*}
  g\cdot (x_1,\dotsc,x_n) = (g(x_1),\dotsc,g(x_n)) \text{ for $x_i\in M$ and $g\in G$.}
\end{equation*}
For $n=1$, this is just the action on $M$ of its automorphism group.
A description of the orbits for this group action has been available for more than a hundred years (see Miller \cite{1905}, Birkhoff \cite{GarrettBirkhoff01011935}, and Dutta-Prasad \cite{MR2793603}).
Some qualitative results concerning $G$-orbits in $M^n$ for general $n$ were obtained by Calvert, Dutta and Prasad in \cite{calvert2013degeneration}.
In this paper we describe the set of $G$-orbits in $M^n$ under the above action for $n=2$.

This general set-up includes two important special cases, namely, finite abelian $p$-groups and finite dimensional primary $K[t]$-modules (isomorphism classes of which correspond to similarity classes of matrices).
The case of finite abelian $p$-groups arises when $R$ is the ring of $p$-adic integers and $\lambda\in \Lambda_0$.
The case of finite dimensional primary $K[t]$-modules arises when $R$ is the ring $E[[t]]$ of formal power series with coefficients in $E=K[t]/p(t)$ for some irreducible polynomial $p(t)\in K[t]$, and $\lambda\in \Lambda_0$.
The exact interpretation of this problem in terms of linear algebra is explained in Section~\ref{sec:relat-repr-quiv}. Specifically, the identities (\ref{eq:Rn1}) and (\ref{eq:gen-func}) relate the numbers of $G$-orbits in $M\times M$ with the problem of counting the number of isomorphism classes of representations of a certain quiver with certain dimension vectors.

Our key result (Theorem~\ref{theorem:sub-orbit}) is a description of the $G$-orbit of a pair in $M\times M$.
From this, when $k$ is finite of order $q$ and $\lambda\in \Lambda_0$, we are able to show that the cardinality of each orbit is a monic polynomial in $q$ (Theorem~\ref{theorem:cardinality-of-orbit}) with integer coefficients which do not depend on $R$.
Moreover, the number of orbits of a given cardinality is also a monic polynomial in $q$ with integer coefficients which do not depend on $R$ (Theorem~\ref{theorem:G_I-orbits-cardinalities}).
Theorem~\ref{theorem:G_I-orbits-cardinalities} gives an algorithm for computing the number of $G$-orbits in $M\times M$ of a given cardinality as a formal polynomial in $q$.
In particular, we obtain an algorithm for computing, for each $\lambda\in \Lambda_0$, a polynomial $n_\lambda(t)\in \Z[t]$ such that $n_\lambda(q)$ is the number of $G$-orbits in $M\times M$ is whenever $R$ has residue field of order $q$.
By implementing this algorithm in sage we have computed $n_\lambda(q)$ for all partitions $\lambda$ of integers up to $19$ at the time of writing. A sample of results obtained is given in Table~\ref{tab:nla}.
The sage program and a list of all $n_\lambda(q)$ are available from the web page \url{http://www.imsc.res.in/~amri/pairs/}.
\begin{table}
  \begin{center}
    \begin{tabular}{|c|c|}
      \hline
      \multicolumn{2}{|c|}{$\mathbf{n= 1 }$}\\
      \hline
      $ (1) $ & $ q + 2 $\\
      \hline
      \multicolumn{2}{|c|}{$\mathbf{n= 2 }$}\\
      \hline
      $ (2) $ & $ q^2 + 2q + 2 $\\
      $ (1, 1) $ & $ q + 3 $\\
      \hline
      \multicolumn{2}{|c|}{$\mathbf{n= 3 }$}\\
      \hline
      $ (3) $ & $ q^3 + 2q^2 + 2q + 2 $\\
      $ (2, 1) $ & $ q^2 + 5q + 5 $\\
      $ (1, 1, 1) $ & $ q + 3 $\\
      \hline
      \multicolumn{2}{|c|}{$\mathbf{n= 4 }$}\\
      \hline
      $ (4) $ & $ q^4 + 2q^3 + 2q^2 + 2q + 2 $\\
      $ (3, 1) $ & $ q^3 + 5q^2 + 7q + 4 $\\
      $ (2, 2) $ & $ q^2 + 3q + 5 $\\
      $ (2, 1, 1) $ & $ q^2 + 5q + 6 $\\
      $ (1, 1, 1, 1) $ & $ q + 3 $\\
      \hline
      \multicolumn{2}{|c|}{$\mathbf{n= 5 }$}\\
      \hline
      $ (5) $ & $ q^5 + 2q^4 + 2q^3 + 2q^2 + 2q + 2 $\\
      $ (4, 1) $ & $ q^4 + 5q^3 + 7q^2 + 6q + 4 $\\
      $ (3, 2) $ & $ q^3 + 5q^2 + 10q + 7 $\\
      $ (3, 1, 1) $ & $ q^3 + 5q^2 + 8q + 6 $\\
      $ (2, 2, 1) $ & $ q^2 + 6q + 8 $\\
      $ (2, 1, 1, 1) $ & $ q^2 + 5q + 6 $\\
      $ (1, 1, 1, 1, 1) $ & $ q + 3 $\\
      \hline
    \end{tabular}
  \end{center}
  \caption{The polynomials $n_\lambda(t)$}
  \label{tab:nla}
\end{table}
In general, we are able to show (Theorem~\ref{theorem:degree}) that $n_\lambda(q)$ is a monic polynomial with integer coefficients of degree $\lambda_1$ (the largest part of $\lambda$).
Our data leads us to make the following conjecture:
\begin{conjecture*}
  The polynomial $n_\lambda(q)$, which represents the number of $G$-orbits in $M\times M$ when $R$ has residue field of order $q$, has non-negative coefficients.
\end{conjecture*}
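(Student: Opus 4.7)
The plan is to exhibit a combinatorial interpretation of $n_\lambda(q)$ that manifestly yields non-negative coefficients. Theorem~\ref{theorem:sub-orbit} gives a description of the $G$-orbit of a pair, and Theorem~\ref{theorem:G_I-orbits-cardinalities} expresses $n_\lambda(q)$ as a sum, indexed by the possible orbit cardinalities, of polynomials in $q$. The first move would be to refine that decomposition: replace the coarse invariant ``orbit cardinality'' by a finer joint invariant of a pair $(x,y)$, such as the isomorphism type of the submodule $Rx+Ry$ together with the relative heights of $x$ and $y$ in the filtration $M\supset PM\supset P^2M\supset\cdots$. The hope is that each stratum of this refined partition is parametrized by the $\Fq$-points of a quasi-projective variety admitting an affine paving, so that its point-count is a non-negative combination of powers of $q$; summing over strata would give positivity of $n_\lambda(q)$.

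A second, more structural avenue is to exploit the quiver-representation picture developed in Section~\ref{sec:relat-repr-quiv}. Counts of isomorphism classes of representations of a quiver with fixed dimension vector are governed by polynomials in $q$ that, by the work of Hausel--Letellier--Rodriguez-Villegas in the indecomposable case and by Hall-algebra techniques more generally, often admit positive combinatorial formulas. If $n_\lambda(q)$ can be written, via identities like (\ref{eq:Rn1}) and (\ref{eq:gen-func}), as a non-negative integer combination of such polynomials for the specific quiver identified in that section, positivity would follow immediately. This reduces the conjecture to a concrete question in the Hall algebra of a small acyclic quiver.

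A third possibility is an induction on $|\lambda|$. Using Theorem~\ref{theorem:sub-orbit}, one should derive a recursion expressing $n_\lambda(q)$ in terms of $n_\mu(q)$ for certain smaller $\mu$ (for instance obtained by removing a corner box or by quotienting by the socle), plus a ``gluing'' contribution that counts the ways a pair in $M_\mu$ can be extended to a pair in $M_\lambda$. The induction would propagate positivity provided each gluing factor is itself a polynomial in $q$ with non-negative coefficients, which is plausible if it counts $\Fq$-points of an iterated affine-space bundle over a Grassmannian-like variety.

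The main obstacle in each approach is the subtlety of the joint structure of a pair: unlike a single element, whose orbit is characterised by a simple invariant, a pair $(x,y)$ has a rich set of interacting joint invariants (the elementary divisor types of $Rx$, $Ry$, $Rx+Ry$, $Rx\cap Ry$, the relative positions of $x$ and $y$ in the $P$-adic filtration, and compatibility data between them). Table~\ref{tab:nla} already shows polynomials such as $q^2+5q+5$ and $q^2+3q+5$ whose coefficients are not $q$-binomials or evident products thereof, so any positive formula will require identifying genuinely new combinatorial objects. Finding a stratification that is simultaneously fine enough for each piece to have a manifestly positive point-count and coarse enough to be effectively parametrized is where the real work of the conjecture appears to lie.
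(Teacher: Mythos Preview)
The paper does not prove this statement: it is explicitly presented as an open conjecture, supported only by computer calculations of $n_\lambda(q)$ for partitions of integers up to $19$. There is therefore no proof in the paper to compare against.

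Your proposal is not a proof either, and you seem aware of this: what you have written is a survey of three possible strategies (stratification with affine pavings, reduction to Kac-type positivity via Section~\ref{sec:relat-repr-quiv}, induction on $|\lambda|$), each ending in an acknowledged obstacle rather than a resolution. None of the three is carried far enough to constitute an argument.

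One concrete issue worth flagging in your second approach: the identities (\ref{eq:Rn1}) and (\ref{eq:gen-func}) express the Kac polynomial $R_{n,1}(q)$ as a combination of products of the $n_\lambda(q^d)$, not the other way around. Positivity flows in the wrong direction for your purposes: knowing (via Hausel--Letellier--Rodriguez-Villegas and Hua) that $R_{n,1}(q)$ has non-negative coefficients does not by itself give positivity of the individual $n_\lambda(q)$, since inverting the product formula~(\ref{eq:gen-func}) to isolate a single $n_\lambda$ introduces alternating signs (already the coefficients $c_\tau(q)$ involve $\Phi_d(q)$, which are not positive polynomials). So this route, as stated, does not reduce the conjecture to known results; at best it suggests a consistency check.

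In short: there is no gap to diagnose relative to the paper, because the paper makes no claim of proof. Your write-up is a reasonable research outline for an open problem, but it should not be labeled a proof.
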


We are able to refine the results described above: the total number of $G$-orbits in $M\times M$ can be broken up into the sum of $G$-orbits in $A\times B$, as $A$ and $B$ run over $G$-orbits in $M$.
The parametrization of $G$-orbits in $M$ is purely combinatorial and does not depend on $R$, or even on $q$ (see Dutta-Prasad \cite{MR2793603}).
The orbits are parametrized by a certain set $J(P)_\lambda$ of order ideals in a lattice (see Section~\ref{sec:orbits-elements} for details).
For $I\in J(P)_\lambda$, let $M^*_I$ denote the orbit in $M$ parametrized by $I$.
For each pair $I,J\in J(P)_\lambda$, we are able to show (Theorem~\ref{theorem:main2}) that the number of $G$-orbits in $M^*_I\times M^*_J$ of any given cardinality is a polynomial in $q$ with integer coefficients which do not depend on $R$.

Our analysis of stabilizers in $G$ of elements of $M$ allows us to show that, for any $\lambda\in \Lambda$, the number of $G$-orbits in $M\times M$ does not change when $\lambda\in\Lambda$ is replaced by the partition derived from $\lambda$ by reducing each of the multiplicities $m_i$ of (\ref{eq:lambda}) to $\min(m_i,2)$ (Corollary~\ref{cor:independence of multiplicities}).
Thus, our calculations for the number of $G$-orbits in $M^*_I\times M^*_J$ extend to all $\lambda\in \Lambda$.
\section{Orbits of elements}
\label{sec:orbits-elements}
The $G$-orbits in $M$ have been understood quite well for over a hundred years (see Miller~\cite{1905}, Birkhoff\cite{GarrettBirkhoff01011935}, for $\lambda\in \Lambda_0$, and relatively recent work by Schwachh\"ofer and Stroppel~\cite{MR1656579}, for general $\lambda\in \Lambda$).
For the present purposes, however, the combinatorial description of orbits due to Dutta and Prasad~\cite{MR2793603} is more relevant.
This section will be a quick recapitulation of those results.

It turns out that for any module $M$ of the form (\ref{eq:module-form}), the $G$-orbits in $M$ are in bijective correspondence with a certain class of ideals in a poset $P$, which we call the fundamental poset.
As a set,
\begin{equation*}
  P = \{(v,k)\mid k\in \P,\; 0\leq v<k\}.
\end{equation*}
The partial order on $P$ is defined by setting
\begin{equation*}
  (v,k)\leq (v',k') \text{ if and only if } v\geq v' \text{ and } k-v\leq k'-v'.
\end{equation*}
The Hasse diagram of the fundamental poset $P$ is shown in Figure~\ref{fig:fundamental-poset}.
\begin{figure}
  \centering
  \includegraphics[width = 0.9\textwidth]{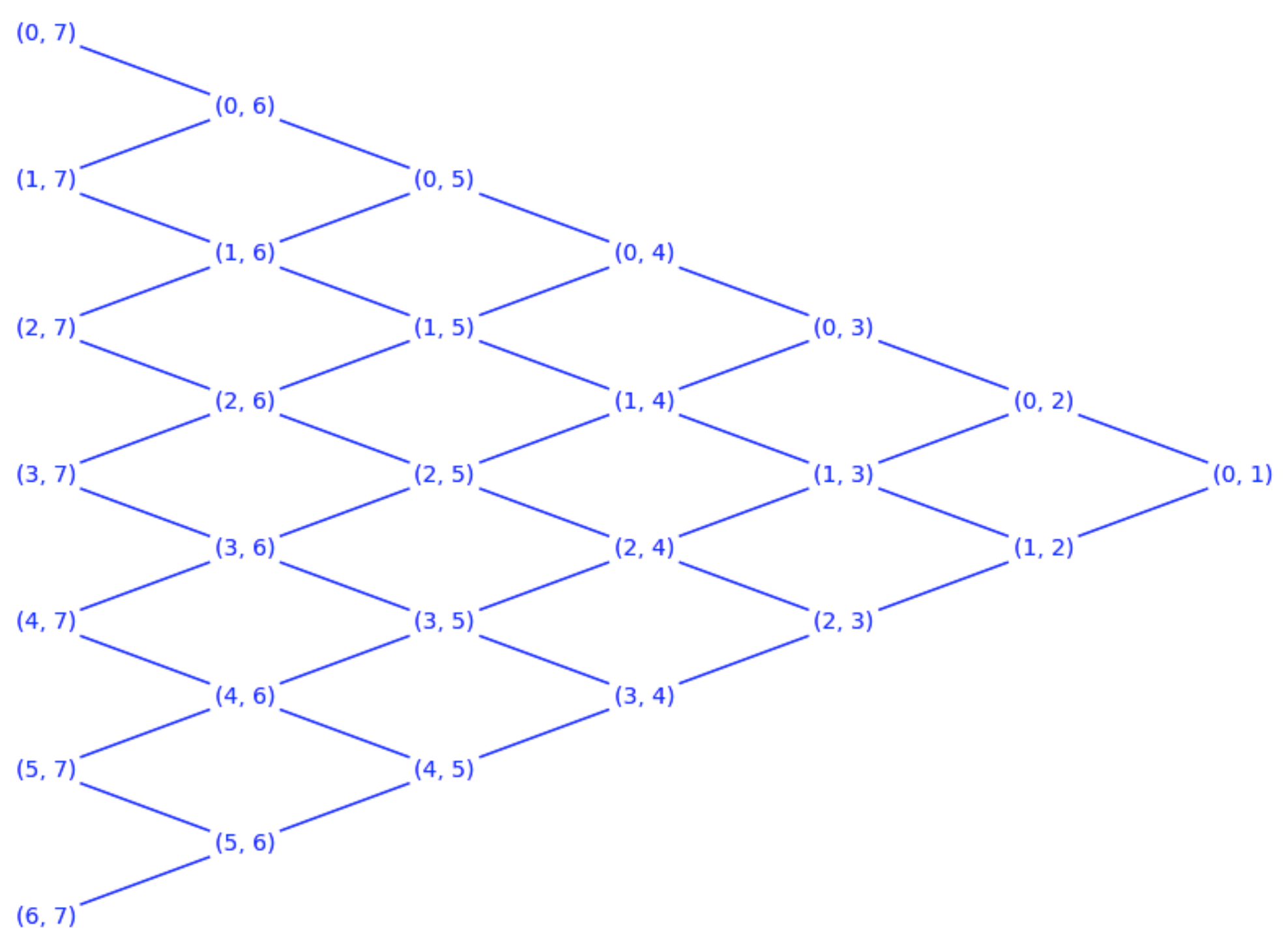}
  \caption{The Fundamental Poset}
  \label{fig:fundamental-poset}
\end{figure}
Let $J(P)$ denote the lattice of order ideals in $P$.
A typical element of $M$ from (\ref{eq:module-form}) is a vector of the form
\begin{equation}
  \label{eq:coordinates}
  m = (m_{\lambda_i,r_i}),
\end{equation}
where $i$ runs over the set $\{1,\dotsc,l\}$, and for each $i$, $r_i$ runs over a set of cardinality $m_i$.
To $m\in M$ we associate the order ideal $I(m)\in J(P)$ generated by the elements
\begin{equation*}
  (v(m_{\lambda_i,r_i}), \lambda_i)
\end{equation*}
for all pairs $(i,r_i)$ such that the coordinate $m_{\lambda_i,r_i}\neq 0$ in $R/P^{\lambda_i}$.
Here, for any $m\in M$, $v(m)$ denotes the largest $k$ for which $m\in P^kM$ (in particular, $v(0)=\infty$).

Consider for example, in the finite abelian $p$-group (or $\Z_p$-module):
\begin{equation}
  \label{eq:fapg-eg}
  M = \Z/p^5\Z\oplus\Z/p^4\Z\oplus\Z/p^4\Z\oplus\Z/p^2\Z\oplus\Z/p\Z,
\end{equation}
the order ideal $I(0,up,p^2,vp,1)$, when $u$ and $v$ are coprime to $p$, is represented inside $P$ by filled-in circles (both red and blue; the significance of the colours will be explained later) in Figure~\ref{fig:ideal-example}.
\begin{figure}
  \centering
  \includegraphics[width = 0.4\textwidth]{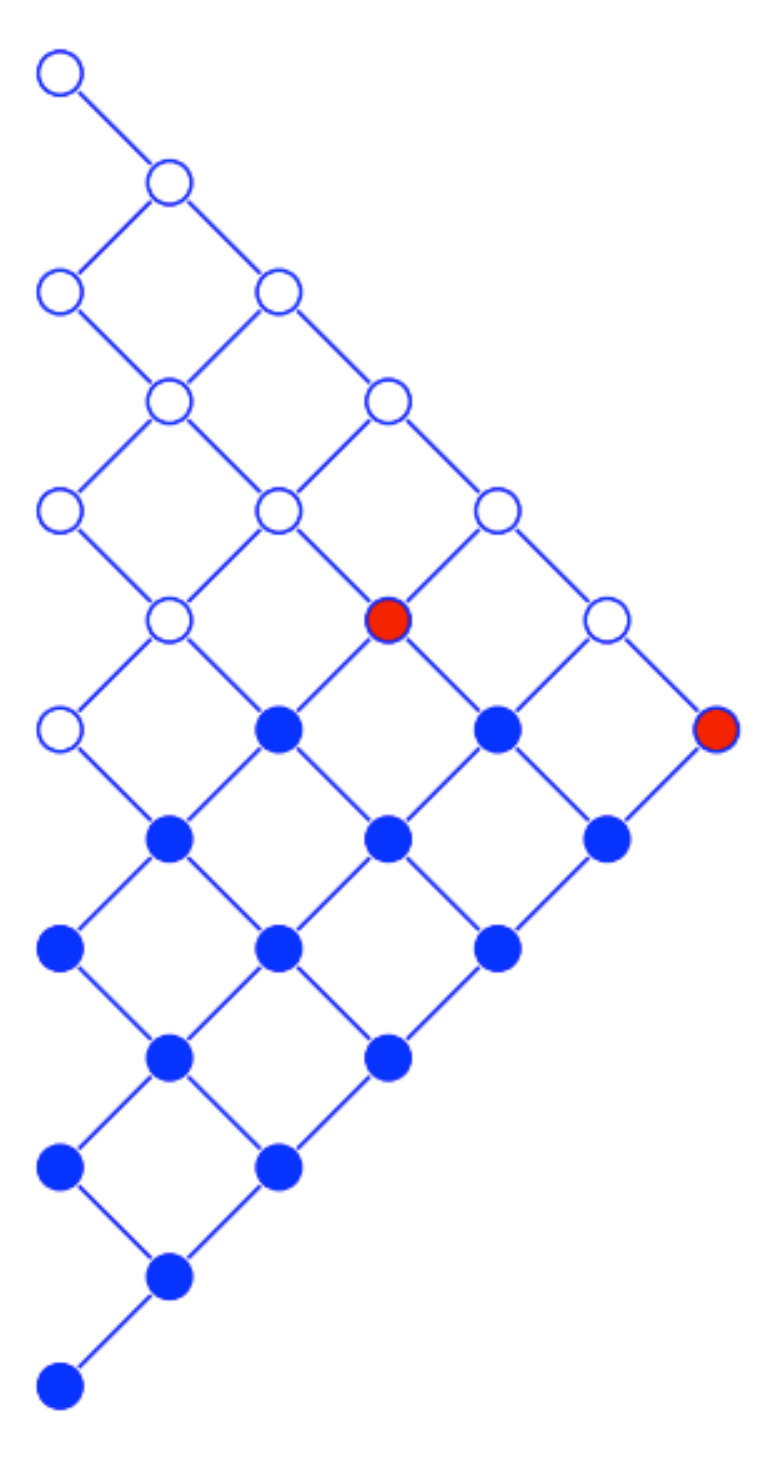}
  \caption{The ideal $I(0,up,p^2,vp,1)$ }
  \label{fig:ideal-example}
\end{figure}
Since the labels of the vertices can be inferred from their positions, they are omitted.

A key observation of \cite{MR2793603} is the following theorem:
\begin{theorem}
  \label{theorem:ideals-homomorphisms}
  Let $M$ and $N$ be two $R$-modules of bounded order.
  An element $n\in N$ is a homomorphic image of $m\in M$ (in other words, there exists a homomorphism $\phi:M\to N$ such that $\phi(m)=n$) if and only if $I(n)\subset I(m)$.
\end{theorem}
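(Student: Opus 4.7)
My plan is to handle both implications after reducing to the case where $N=R/P^\mu$ is cyclic. For the forward direction, postcomposition of $\phi$ with each coordinate projection $N\to R/P^{\mu_j}$ yields a homomorphism from $M$ to a cyclic module sending $m$ to $n_{\mu_j,s_j}$; since $I(n)$ is by definition the order ideal generated by the classes $(v(n_{\mu_j,s_j}),\mu_j)$ for the nonzero coordinates of $n$, it suffices to show that each of these generators lies in $I(m)$. For the backward direction, $\phi:M\to N$ can be built by specifying its $(j,s_j)$-component as an element of $\Hom(M,R/P^{\mu_j})$ independently for each $(j,s_j)$, so I only need, for each nonzero $n_{\mu_j,s_j}$, to produce a homomorphism $M\to R/P^{\mu_j}$ sending $m$ to $n_{\mu_j,s_j}$.

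\textbf{The cyclic case.} The key observation is that any $\psi:M\to R/P^\mu$ is determined by the images $\psi(e_{i,r_i})$ of the basis vectors, and each such image must be annihilated by $\pi^{\lambda_i}$, forcing it to lie in $P^{\max(0,\mu-\lambda_i)}/P^\mu$. For the forward implication, I would expand $\psi(m)=\sum m_{\lambda_i,r_i}\psi(e_{i,r_i})$ and use that the valuation of a sum is bounded below by the minimum valuation of its terms to identify one index $(i,r_i)$ for which $v(m_{\lambda_i,r_i})+v(\psi(e_{i,r_i}))\le v(\psi(m))$; the two numerical conditions defining the partial order on $P$ then follow directly from the bound $v(\psi(e_{i,r_i}))\ge\max(0,\mu-\lambda_i)$. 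For the backward implication, given $(v(n_{\mu_j,s_j}),\mu_j)\in I(m)$, I pick a generator $(v_i,\lambda_i)$ of $I(m)$ dominating it, write $m_{\lambda_i,r_i}=u\pi^{v_i}$ and $n_{\mu_j,s_j}=w\pi^{v(n_{\mu_j,s_j})}$ for units $u,w$, and define $\psi(e_{i,r_i})=wu^{-1}\pi^{v(n_{\mu_j,s_j})-v_i}$, zero on the other basis vectors; a direct computation then yields $\psi(m)=n_{\mu_j,s_j}$.

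\textbf{Main obstacle.} The principal technical point is the verification that the homomorphism just constructed is in fact well-defined, i.e., that the prescribed image of $e_{i,r_i}$ is killed by $\pi^{\lambda_i}$. This reduces to the inequality $\lambda_i-v_i\ge\mu_j-v(n_{\mu_j,s_j})$, which is precisely the second condition $k-v\le k'-v'$ in the definition of the partial order on the fundamental poset $P$. Once this calibration is recognized the proof is essentially routine bookkeeping; the genuinely substantive insight is the formulation of the invariant $I(m)$ and the choice of partial order, which make the two valuation inequalities of the Hom computation match exactly with the two conditions of $(v,k)\le(v',k')$.
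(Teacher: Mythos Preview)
The paper does not supply its own proof of this theorem; it is quoted as background from Dutta--Prasad (introduced with ``A key observation of \cite{MR2793603} is the following theorem''), so there is nothing internal to compare against.

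Your argument is correct and is essentially the natural proof. The reduction to a cyclic target $R/P^{\mu}$ via coordinate projections, the non-Archimedean estimate isolating a coordinate $(i,r_i)$ with $v(m_{\lambda_i,r_i})+v(\psi(e_{i,r_i}))\le v(\psi(m))$ in the forward direction, and the explicit one-coordinate homomorphism $e_{i,r_i}\mapsto wu^{-1}\pi^{v(n_{\mu_j,s_j})-v_i}$ for the converse all go through as you describe. Two small points worth making explicit in a write-up: in the forward direction you should note that since $\psi(m)\neq 0$ the minimizing term is nonzero, so $m_{\lambda_i,r_i}\neq 0$ and $(v(m_{\lambda_i,r_i}),\lambda_i)$ really is one of the generators of $I(m)$; and in the backward direction the first inequality $v_i\le v(n_{\mu_j,s_j})$ of the partial order is what guarantees the exponent $v(n_{\mu_j,s_j})-v_i$ is non-negative, complementing the second inequality that you already identified as the well-definedness check.
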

It follows that if $m'$ lies in the $G$-orbit of $m\in M$, then $I(m)=I(m')$.
It turns out that the converse is also true:
\begin{theorem}
  \label{theorem:ideals-and-orbits}
  If $I(m)=I(m')$ for any $m,m'\in M$, then $m$ and $m'$ lie in the same $G$-orbit.
\end{theorem}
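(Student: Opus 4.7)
The plan is to prove the stronger statement that every $m\in M$ with $I(m)=I$ lies in the $G$-orbit of a canonical representative $m_I\in M$ depending only on $I$; applied to both $m$ and $m'$, this immediately gives $m\sim_G m'$.

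Fix the decomposition $M=\bigoplus_i(R/P^{\lambda_i})^{\oplus m_i}$ of~(\ref{eq:module-form}), let $e_{i,r}$ denote the generator of the $r$th copy of $R/P^{\lambda_i}$, and write $m=(m_{\lambda_i,r_i})$ in the coordinates of~(\ref{eq:coordinates}). A preliminary combinatorial observation is that any two elements of $P$ sharing a second coordinate are comparable, so every order ideal $I\in J(P)$ has at most one minimal generator per value of $k$; the generators of $I=I(m)$ may thus be listed as $(v_1,\lambda_{i_1}),\ldots,(v_s,\lambda_{i_s})$ with pairwise distinct $\lambda_{i_j}$. Set $m_I:=\sum_{j=1}^s\pi^{v_j}e_{i_j,1}$.

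Next, I would build an automorphism $g\in G$ sending $m_I$ to $m$ in two stages. For each $i=i_j$, maximality of $(v_j,\lambda_{i_j})$ forces at least one coordinate $m_{\lambda_i,r_i}$ to have valuation exactly $v_j$, while all others in block $i$ have valuation at least $v_j$; an element of $GL_{m_i}(R/P^{\lambda_i})$ then moves this coordinate to position $r_i=1$, scales it to $\pi^{v_j}$, and clears the rest of block $i$ by column operations, producing the diagonal block $g_{ii}$. For each $i$ not appearing among the $i_j$, every nonzero entry $m_{\lambda_i,r_i}$ satisfies $(v(m_{\lambda_i,r_i}),\lambda_i)\leq(v_j,\lambda_{i_j})$ for some $j$; unwinding the poset relation gives $v(m_{\lambda_i,r_i})\geq v_j+\max(0,\lambda_i-\lambda_{i_j})$, which is precisely the condition for the scalar $m_{\lambda_i,r_i}/\pi^{v_j}$ to define a homomorphism in $\Hom_R(R/P^{\lambda_{i_j}},R/P^{\lambda_i})\cong\pi^{\max(0,\lambda_i-\lambda_{i_j})}R/P^{\lambda_i}$. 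Placing such a homomorphism in the off-diagonal block $g_{i,i_j}$ produces $m_{\lambda_i,r_i}$ from $\pi^{v_j}e_{i_j,1}$, accomplishing the clearing.

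The main technical point is then to assemble these data into a single block matrix $g=(g_{kk'})$ and to verify that $g$ lies in $G=\Aut_R(M)$. Since $\lambda_1>\cdots>\lambda_l$, the $(k,k')$-block has entries in $\pi^{\max(0,\lambda_k-\lambda_{k'})}R/P^{\lambda_k}$; reducing modulo $\pi$, the blocks with $k<k'$ vanish, leaving a block lower-triangular matrix whose diagonal blocks are either elements of $GL_{m_k}(R/P)$ or the identity. Hence $g$ is invertible by Nakayama's lemma. Applying the same reduction to $m'$ yields the same $m_I$, whence $m\sim_G m'$. Only finitely many blocks are affected at any stage, so the argument extends unchanged to $\lambda\in\Lambda$ with infinite multiplicities.
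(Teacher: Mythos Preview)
The paper does not supply its own proof of this theorem: Section~\ref{sec:orbits-elements} is explicitly a recapitulation of results from Dutta--Prasad~\cite{MR2793603}, and Theorem~\ref{theorem:ideals-and-orbits} is simply quoted. Your canonical element $m_I$ coincides with the paper's $m(I)$, so your argument is in effect a direct proof of Theorem~\ref{theorem:orbit-par}, from which Theorem~\ref{theorem:ideals-and-orbits} follows.

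Your strategy is correct and the key computations are right. Two small points of exposition deserve attention. First, there is a direction mismatch in the diagonal-block step: you announce that $g$ sends $m_I$ to $m$, but the operations you describe (move a coordinate of valuation $v_j$ to position~$1$, scale to $\pi^{v_j}$, clear the rest) take the $i_j$-block of $m$ to $(\pi^{v_j},0,\dotsc,0)$, i.e.\ they build $g_{i_ji_j}^{-1}$ rather than $g_{i_ji_j}$. (These are also row operations on the coordinate vector, not column operations.) This is harmless once noted. Second, for the invertibility argument you should say explicitly that all unspecified blocks of $g$ are set to zero off the diagonal and to the identity on the diagonal; then reduction modulo $\pi$ is block lower-triangular with invertible diagonal blocks, and the Nakayama step goes through since $M$ is finitely generated over the Artinian local ring $R/P^{\lambda_1}$ when $\lambda\in\Lambda_0$. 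Your final remark handles general $\lambda\in\Lambda$ correctly: since any $m\in M$ has only finitely many nonzero coordinates, $g-\id{M}$ is supported on a finitely generated direct summand.
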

Note that the orbit of $0$ corresponds to the empty ideal.
Let $J(P)_\lambda$ denote the sublattice of $J(P)$ consisting of ideals such that $\max I$ is contained in the set
\begin{equation*}
  P_\lambda = \{(v,k)\mid k=\lambda_i \text{ for some } 1\leq i \leq l\}.
\end{equation*}
Then the $G$-orbits in $M$ are in bijective correspondence with the order ideals\footnote{The lattice $J(P)_\lambda$ is isomorphic to the lattice $J(P_\lambda)$ of order ideals in the induced subposet $P_\lambda$. In \cite{MR2793603}, $J(P_\lambda)$ is used in place of $J(P)_\lambda$.} $J(P)_\lambda$.
For each order ideal $I\in J(P)_\lambda$, we use the notation
\begin{equation*}
  M^*_I = \{m\in M\mid I(m)=I\}
\end{equation*}
for the orbit corresponding to $I$.

A convenient way to think about ideals in $P$ is in terms of what we call their boundaries: for each positive integer $k$ define the boundary valuation of $I$ at $k$ to be
\begin{equation*}
  \partial_k I = \min\{v\mid (v,k)\in I\}.
\end{equation*}
We denote the sequence $\{\partial_k I\}$ of boundary valuations by $\partial I$ and call it the boundary of $I$.
This is indeed the boundary of the region with colored dots in Figure~\ref{fig:ideal-example}.

For each order ideal $I\subset P$, let $\max I$ denote its set of maximal elements.
The ideal $I$ is completely determined by $\max I$: in fact taking $I$ to $\max I$ gives a bijection from the lattice $J(P)_\lambda$ to the set of antichains in $P_\lambda$. 
For example, the maximal elements of the ideal in Figure~\ref{fig:ideal-example} are represented by red circles.

\begin{theorem}
  \label{theorem:description-of-orbits}
  The orbits $M^*_I$ consists of elements $m=(m_{\lambda_i,r_i})$ such that $v(m_{\lambda_i,r_i})\geq \partial_{\lambda_i}I$ for all $\lambda_i$ and $r_i$, and such that $v(m_{\lambda_i,r_i}) = \partial_{\lambda_i}I$ for at least one $r_i$ if $(\partial_{\lambda_i} I, \lambda_i)\in \max I$.
\end{theorem}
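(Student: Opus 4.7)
The plan is to use Theorem~\ref{theorem:ideals-and-orbits} to reduce the statement to a combinatorial identification of the ideal $I(m)$ with $I$, and then exploit the very simple structure of the fundamental poset $P$ on each ``slice'' of fixed height. Specifically, by Theorem~\ref{theorem:ideals-and-orbits} we have $M^*_I = \{m\in M \mid I(m)=I\}$, and $I(m)$ is the order ideal generated by the elements $(v(m_{\lambda_i,r_i}),\lambda_i)$ as $(i,r_i)$ ranges over pairs with $m_{\lambda_i,r_i}\neq 0$. The key structural observation is that on the slice $\{(v,k) \mid 0\leq v<k\}$ at a fixed height $k$, the partial order restricts to a total order in which $(v,k)\leq (v',k)$ iff $v\geq v'$; hence for any order ideal $J\subset P$, an element $(v,k)$ lies in $J$ if and only if $v\geq \partial_k J$. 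In particular, two order ideals of $P$ coincide iff their boundary sequences coincide.

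For the forward direction, assume $I(m)=I$. For each pair $(i,r_i)$ with $m_{\lambda_i,r_i}\neq 0$, the generator $(v(m_{\lambda_i,r_i}),\lambda_i)$ lies in $I$, so the slice observation gives $v(m_{\lambda_i,r_i})\geq \partial_{\lambda_i}I$; when $m_{\lambda_i,r_i}=0$ the inequality is vacuous with $v(0)=\infty$, and (1) follows. For (2), suppose $(\partial_{\lambda_i}I,\lambda_i)\in \max I$. Since this element lies in $I=I(m)$, it is dominated by some generator $(v(m_{\lambda_j,r_j}),\lambda_j)$; being maximal in $I$, it cannot be strictly smaller, so it must equal that generator, forcing $\lambda_j=\lambda_i$ and $v(m_{\lambda_i,r_j})=\partial_{\lambda_i}I$.

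For the reverse direction, assume (1) and (2). Condition (1) together with the slice observation shows that every generator of $I(m)$ already lies in $I$, so $I(m)\subset I$. Conversely, $I$ is the order ideal generated by $\max I$, so it suffices to show $\max I\subset I(m)$. Since $I\in J(P)_\lambda$, every element of $\max I$ has the form $(v',\lambda_i)$ for some $i$; the same maximality argument as above forces $v'=\partial_{\lambda_i}I$, whence by (2) some $r_i$ satisfies $v(m_{\lambda_i,r_i})=v'$, so $(v',\lambda_i)$ is itself a generator of $I(m)$. Hence $I\subset I(m)$ and equality holds.

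The argument is essentially bookkeeping once the slice structure is in hand; the only point requiring mild care is distinguishing a generic element $(v,\lambda_i)\in I$, for which (1) alone suffices, from a \emph{maximal} element $(\partial_{\lambda_i}I,\lambda_i)\in\max I$, which must be realized exactly by some coordinate of $m$ via (2). No deeper obstacle is anticipated.
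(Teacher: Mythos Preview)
Your proof is correct. The paper does not actually give its own proof of this theorem: Section~\ref{sec:orbits-elements} is explicitly framed as ``a quick recapitulation'' of results from Dutta--Prasad \cite{MR2793603}, and Theorem~\ref{theorem:description-of-orbits} is stated there without argument. So there is nothing in the paper to compare your proof against.

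That said, your argument is exactly the natural one given the surrounding material: you correctly identify that $M^*_I=\{m\mid I(m)=I\}$ by Theorem~\ref{theorem:ideals-and-orbits}, that each slice $\{(v,k):0\le v<k\}$ is totally ordered with $(v,k)\le(v',k)$ iff $v\ge v'$, and that consequently membership of $(v,k)$ in an ideal $J$ is equivalent to $v\ge\partial_kJ$. The two directions then fall out as you describe. One tiny notational wrinkle: when you write ``forcing $\lambda_j=\lambda_i$ and $v(m_{\lambda_i,r_j})=\partial_{\lambda_i}I$'', note that the $\lambda_i$ are pairwise distinct, so $\lambda_j=\lambda_i$ forces $j=i$ and the index $r_j$ is really one of the $r_i$'s; this is harmless but worth saying explicitly. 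Similarly, in the reverse direction you might note that $v(m_{\lambda_i,r_i})=\partial_{\lambda_i}I<\lambda_i$ guarantees $m_{\lambda_i,r_i}\neq 0$, so that $(\partial_{\lambda_i}I,\lambda_i)$ really is one of the generators of $I(m)$.
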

In other words, the elements of $M^*_I$ are those elements all of whose coordinates have valuations not less than the corresponding boundary valuation, and at least one coordinate corresponding to each maximal element of $I$ has valuation exactly equal to the corresponding boundary valuation.

In the running example with $M$ as in (\ref{eq:fapg-eg}) and $I$ as in Figure~\ref{fig:ideal-example}, the conditions for $m=(m_{5,1},m_{4,1},m_{4,2},m_{2,1},m_{1,1})$ to be in $M^*_I$ are:
\begin{itemize}
\item $v(m_{5,1})\geq 4$,
\item $\min(v(m_{4,1}),v(m_{4,2}))=1$,
\item $v(m_{2,1})\geq 1$,
\item $v(m_{1,1})=0$.
\end{itemize}
For each $I\in J(P)_\lambda$, with 
\begin{equation*}
  \max I = \{(v_1,k_1),\dotsc,(v_s,k_s)\}
\end{equation*}
define an element $m(I)$ of $M$ whose coordinates are given by
\begin{equation*}
  m_{\lambda_i,r_i} =
  \begin{cases}
    \pi^{v_j} & \text{ if } \lambda_i = k_j \text{ and } r_j = 1\\
    0 & \text{ otherwise}.
  \end{cases}
\end{equation*}
In other words, for each element $(v_j,k_j)$ of $\max I$, pick $\lambda_i$ such that $\lambda_i=k_j$.
In the summand $(R/P^{\lambda_i})^{\oplus m_i}$, set the first coordinate of $m(I)$ to $\pi^{v_j}$, and the remaining coordinates to $0$.
For example, in the finite abelian $p$-group of (\ref{eq:fapg-eg}), and the ideal $I$ of Figure~\ref{fig:ideal-example},
\begin{equation*}
  m(I) = (0,p,0,0,1).
\end{equation*}
\begin{theorem}
  \label{theorem:orbit-par}
  Let $M=M_\lambda$ be an $R$-module of bounded order as in (\ref{eq:module-form}).
  The functions $m\mapsto I(m)$ and $I\mapsto m(I)$ are mutually inverse bijections between the set of $G$-orbits in $M$ and the set of order ideals in $J(P)_\lambda$.
\end{theorem}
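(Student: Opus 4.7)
The plan is to derive this essentially as a corollary of Theorems~\ref{theorem:ideals-homomorphisms} and~\ref{theorem:ideals-and-orbits}. Applied to an arbitrary $g\in G$ and to $g\inv$, the former yields $I(g\cdot m)\subset I(m)\subset I(g\inv\cdot(g\cdot m))=I(g\cdot m)$ for all $m\in M$, so $m\mapsto I(m)$ is constant on $G$-orbits; the latter says that distinct orbits are sent to distinct ideals. So the first thing I would do is package these two facts to conclude that the induced map from $G$-orbits in $M$ to $J(P)$ is injective, and then identify its image as $J(P)_\lambda$ by exhibiting $I\mapsto m(I)$ as an inverse.

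That the image lies in $J(P)_\lambda$ is immediate from the definition: $I(m)$ is generated by elements $(v(m_{\lambda_i,r_i}),\lambda_i)$, whose second coordinates are parts of $\lambda$, so $\max I(m)\subset P_\lambda$. Next I would verify that the recipe for $m(I)$ produces a well-defined element of $M$. The hypothesis $\max I\subset P_\lambda$ ensures that each $(v_j,k_j)\in\max I$ satisfies $k_j=\lambda_i$ for some $i$, and the corresponding summand $(R/P^{\lambda_i})^{\oplus m_i}$ is available because $m_i\geq 1$. A small but essential observation here is that two distinct elements of the antichain $\max I$ cannot share a second coordinate: the defining order on $P$ forces $(v,k)\leq (v',k)$ to imply $v=v'$, so any two elements of $P$ with the same second coordinate are comparable, indeed equal. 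Consequently the various $\pi^{v_j}$ are placed into distinct summands and do not conflict.

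The core computation is $I(m(I))=I$. The only nonzero coordinates of $m(I)$ are the $\pi^{v_j}$ placed in the first slot of the summand with $\lambda_i=k_j$. Since $v_j<k_j=\lambda_i$, each $\pi^{v_j}$ is nonzero in $R/P^{\lambda_i}$ with valuation exactly $v_j$. Hence $I(m(I))$ is the order ideal generated by $\{(v_j,k_j):j=1,\dotsc,s\}=\max I$, which is $I$ itself. Combined with injectivity this shows that the orbit map is a bijection onto $J(P)_\lambda$, and that $I\mapsto m(I)$ (viewed on orbits) is its inverse: for any $m\in M$ the elements $m$ and $m(I(m))$ yield the same ideal and so, by Theorem~\ref{theorem:ideals-and-orbits}, lie in the same $G$-orbit.

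No step is genuinely hard; the only slightly delicate point is the antichain observation about second coordinates, and everything else is a direct assembly of the two preceding theorems together with the elementary fact that $v(\pi^{v_j})=v_j$ in $R/P^{\lambda_i}$ when $v_j<\lambda_i$.
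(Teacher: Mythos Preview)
Your argument is correct. The paper does not actually supply a proof of this theorem: Section~\ref{sec:orbits-elements} is, in its own words, ``a quick recapitulation'' of results from Dutta--Prasad~\cite{MR2793603}, and Theorem~\ref{theorem:orbit-par} is simply stated as a known fact. Your proposal gives exactly the expected derivation from Theorems~\ref{theorem:ideals-homomorphisms} and~\ref{theorem:ideals-and-orbits}, including the two small checks the paper leaves implicit (that $\max I(m)\subset P_\lambda$, and that distinct elements of the antichain $\max I$ have distinct second coordinates so that $m(I)$ is well defined). There is nothing to compare against; your write-up is essentially the proof the paper presumes the reader can supply.
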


For any ideal $I\in J(P)$, define
\begin{equation*}
  M_I = \coprod_{\{J\in J(P)_\lambda\mid J\subset I\}} M^*_I.
\end{equation*}
This submodule, being a union of $G$-orbits, is $G$-invariant.
The description of $M_I$ in terms of valuations of coordinates and boundary valuations is very simple:
\begin{equation}
  \label{eq:M_I}
  M_I = \{ m=(m_{\lambda_i,r_i})\mid v(m_{\lambda_i,r_i})\geq \partial_{\lambda_i} I\}.
\end{equation}

Note that the map $I\mapsto M_I$ is not injective on $J(P)$.
It becomes injective when restricted to $J(P)_\lambda$.
For example, if $J$ is the order ideal in $P$ generated by $(2,6)$, $(1,4)$ and $(0,1)$, then the ideal $J$ is strictly larger than the ideal $I$ of Figure~\ref{fig:ideal-example}, but when $M$ is as in (\ref{eq:fapg-eg}), $M_I=M_J$.

The $G$-orbits in $M$ are parametrized by the finite distributive lattice $J(P)_\lambda$.
Moreover, each order ideal $I\in J(P)_\lambda$ gives rise to a $G$-invariant submodule of $M_I$ of $M$.
The lattice structure of $J(P)_\lambda$ gets reflected in the poset structure of the submodules $M_I$ when they are partially ordered by inclusion:
\begin{theorem}
  \label{theorem:characteristic-submodules}
  The map $I\mapsto M_I$ gives an isomorphism from $J(P)_\lambda$ to the poset of $G$-invariant submodules of $M$ of the form $M_I$.
\end{theorem}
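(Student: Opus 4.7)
The plan is to verify three properties of the map $\Phi\colon I\mapsto M_I$ restricted to $J(P)_\lambda$: well-definedness, order-preservation, and order-reflection. Order-reflection yields injectivity, and surjectivity onto the image is automatic; together these give the claimed poset isomorphism.

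For well-definedness, I would note that the coordinate description (\ref{eq:M_I}) identifies $M_I$ with the direct sum $\bigoplus_i (P^{\partial_{\lambda_i}I}/P^{\lambda_i})^{\oplus m_i}$ (using the convention $\partial_{\lambda_i} I = \lambda_i$ when the column $k=\lambda_i$ does not meet $I$), which is manifestly an $R$-submodule of $M$. Its $G$-invariance is already visible from its presentation as the union of orbits $M^*_J$ with $J\subset I$, $J\in J(P)_\lambda$; equivalently, $m\in M_I$ iff $I(m)\subset I$, a condition that by Theorem~\ref{theorem:ideals-and-orbits} and Theorem~\ref{theorem:ideals-and-orbits} depends only on the $G$-orbit of $m$. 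Order-preservation is similarly immediate: if $I\subset I'$ then for each $k$ the set $\{v\mid (v,k)\in I\}$ is contained in $\{v\mid (v,k)\in I'\}$, so $\partial_k I\geq \partial_k I'$, and hence $M_I\subset M_{I'}$ from the coordinate characterization.

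The main step is order-reflection. Assume $M_I\subset M_{I'}$ for $I,I'\in J(P)_\lambda$. For each maximal element $(v,\lambda_i)\in\max I$ (which, because $I\in J(P)_\lambda$, has second coordinate equal to some $\lambda_i$), I would introduce the test element $m\in M$ whose $(\lambda_i,1)$-coordinate is $\pi^v$ and whose other coordinates are zero. Restricted to a single column the poset order on $P$ is the reverse of the natural order on $v$, so maximality of $(v,\lambda_i)$ in $I$ forces $v=\partial_{\lambda_i} I$; hence $m\in M_I$. By hypothesis $m\in M_{I'}$, and reading off the $(\lambda_i,1)$-coordinate gives $v\geq \partial_{\lambda_i} I'$, that is, $(v,\lambda_i)\in I'$. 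Thus $\max I\subset I'$, and since $I'$ is an order ideal, $I\subset I'$.

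The only real obstacle is this order-reflection step, and it reduces to the construction of a single test element for each maximal element of $I$. A small amount of bookkeeping completes the picture: the convention at empty columns is benign, and an arbitrary submodule of the form $M_I$ with $I\in J(P)$ coincides with $M_{I^\flat}$ for the unique $I^\flat\in J(P)_\lambda$ having the same boundary valuations at each $\lambda_i$, so the image of $\Phi$ is indeed the full poset described in the statement.
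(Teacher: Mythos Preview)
Your argument is correct. The paper does not actually prove Theorem~\ref{theorem:characteristic-submodules}: Section~\ref{sec:orbits-elements} is explicitly a recapitulation of results from Dutta--Prasad~\cite{MR2793603}, and the theorem is stated there without proof, followed only by the reformulation $M_{I\cup J}=M_I+M_J$, $M_{I\cap J}=M_I\cap M_J$ and a reference to Kerby--Rode for the stronger fact (over residue fields with at least three elements) that every $G$-invariant submodule is of this form. So there is no proof in the paper to compare against.

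Your approach---well-definedness via the coordinate description~(\ref{eq:M_I}), order-preservation from monotonicity of boundary valuations, and order-reflection via a single test element $\pi^v e_{\lambda_i,1}$ for each $(v,\lambda_i)\in\max I$---is a clean direct verification. The key computation, that a maximal element $(v,\lambda_i)$ of $I$ has $v=\partial_{\lambda_i}I$ because the column $\{(\cdot,\lambda_i)\}$ is totally ordered with smaller $v$ larger in $P$, is exactly right. One cosmetic point: you cite ``Theorem~\ref{theorem:ideals-and-orbits} and Theorem~\ref{theorem:ideals-and-orbits}'' where presumably the first reference should be to Theorem~\ref{theorem:ideals-homomorphisms}.
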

In other words, for ideals $I,J\in J(P)_\lambda$,
\begin{equation*}
  M_{I\cup J} = M_I + M_J \text{ and } M_{I\cap J} = M_I\cap M_J.
\end{equation*}
In fact, when $k$ has at least three elements, every $G$-invariant submodule is of the form $M_I$, therefore $J(P)_\lambda$ is isomorphic to the lattice of $G$-invariant submodules (Kerby and Rode \cite{MR2782608}).

When $M$ is a finite $R$-module (this happens when the residue field of $R$ is finite and $\lambda\in \Lambda_0$), then the $G$-orbits in $M$ are also finite.
The cardinality of the orbit $M^*_I$ is given by (see \cite[Theorem~8.5]{MR2793603}):
\begin{equation}
  \label{eq:order-of-orbit}
  |M^*_I| = q^{[I]_\lambda}\prod_{(v_i,\lambda_i)\in \max I}(1-q^{-m_i}).
\end{equation}
Here $[I]$ denotes the number of points in $I\cap P_\lambda$ counted \emph{with multiplicity}:
\begin{equation*}
  [I]_\lambda = \sum_{i=1}^l\sum_{\{v\mid(v,\lambda_i)\in I\}} m_i.
\end{equation*}
In particular, we have:
\begin{theorem}
  \label{theorem:cardinality-of-singleton-orbit}
  Let $q$ denote the cardinality of the residue field of $R$.
  When $M$ is finite, the cardinality of $M^*_I$ is a monic polynomial in $q$ of degree $[I]_\lambda$ whose coefficients are integers which do not depend on $R$.
\end{theorem}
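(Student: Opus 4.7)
The plan is to read the theorem directly off the formula~(\ref{eq:order-of-orbit}). Writing $\max I = \{(u_1, \lambda_{i_1}), \ldots, (u_s, \lambda_{i_s})\}$ and expanding the product gives
\[
  |M^*_I| \;=\; \sum_{S \subseteq \{1,\ldots,s\}} (-1)^{|S|}\, q^{[I]_\lambda - \sum_{j \in S} m_{i_j}}.
\]
The $S = \emptyset$ summand is $q^{[I]_\lambda}$, and every other summand has strictly smaller exponent since each $m_{i_j} \geq 1$. Hence, once I verify that all exponents are non-negative integers, this expression is automatically a monic polynomial in $q$ of degree $[I]_\lambda$ with integer coefficients (obtained as $\pm 1$ sums). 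Independence from $R$ is manifest: the right-hand side involves only the combinatorial data $[I]_\lambda$ and the multiplicities $m_i$.

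The single point requiring argument is therefore the inequality $\sum_{j \in S} m_{i_j} \leq [I]_\lambda$ for every $S \subseteq \{1,\ldots,s\}$. I will deduce this by first showing that $i_1, \ldots, i_s$ are pairwise distinct: within a fixed column $\{(v,\lambda_i) \mid 0 \leq v < \lambda_i\}$, the relation from $P$ reduces to $(v,\lambda_i) \leq (v',\lambda_i) \iff v \geq v'$, so each column is totally ordered. Any order ideal therefore intersects a column in a chain, which has a unique top element; that element is the only candidate for membership in $\max I$ from this column. Thus distinct elements of $\max I$ live in distinct columns. The column $\lambda_{i_j}$ contains at least the point $(u_j, \lambda_{i_j})$, contributing at least $m_{i_j}$ to the defining sum of $[I]_\lambda$, and distinctness of the $i_j$ lets me add these contributions without overcounting:
\[
  [I]_\lambda \;\geq\; \sum_{j=1}^{s} m_{i_j} \;\geq\; \sum_{j \in S} m_{i_j}.
\]

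I do not anticipate any obstacle beyond this column-chain observation: polynomiality, monicity, the degree $[I]_\lambda$, integrality of the coefficients, and independence from $R$ all fall out of the expansion together with this single inequality.
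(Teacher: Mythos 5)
Your proof is correct and follows the same route as the paper, which states this theorem as an immediate consequence of the orbit-cardinality formula (\ref{eq:order-of-orbit}) without further argument. Your extra verification that the exponents $[I]_\lambda-\sum_{j\in S}m_{i_j}$ are non-negative (via the observation that $\max I$ meets each column of $P_\lambda$ at most once) is a correct and welcome filling-in of the detail the paper leaves implicit.
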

The formula for the cardinality of the $G$-invariant submodule is much simpler:
\begin{equation}
  \label{eq:order-of-submodule}
  |M_I| = q^{[I]_\lambda}.
\end{equation}
\section{Sum of orbits}
\label{sec:sum-of-orbits-lemma}
This section proves a combinatorial lemma on the sum of two $G$-orbits in $M$ which will be needed in Section~\ref{sec:stab-canon-forms}.
Given order ideals $I,J\subset J(P)_\lambda$, the set
\begin{equation*}
  M^*_I + M^*_J =\{ m + n \mid m\in M^*_I \text{ and } n\in M^*_J\}.
\end{equation*}
This set is clearly $G$-invariant, and therefore a union of $G$-orbits.
In this section, we determine exactly which $G$-orbits occur in $M^*_I+M^*_J$.
\begin{lemma}
  \label{lemma:sum-of-orbits}
  For $I, J\in J(P)_\lambda$, every element $(m_{\lambda_i,r_i})$ of $M^*_I+M^*_J$ satisfies the conditions
  \begin{enumerate}
  \item $v(m_{\lambda_i,r_i})\geq \max(\partial_{\lambda_i}I,\partial_{\lambda_i}J)$.
  \item If $(\partial_{\lambda_i}I,\lambda_i)\in \max I - J$, then $\min_{r_i} v(m_{\lambda_i,r_i}) = \partial_{\lambda_i}I$.
  \item If $(\partial_{\lambda_i}J,\lambda_i)\in \max J - I$, then $\min_{r_i} v(m_{\lambda_i,r_i}) = \partial_{\lambda_i}J$.
  \end{enumerate}
  If the residue field of $R$ has at least three elements, then every element of $M$ satisfying these three conditions is in $M^*_I+M^*_J$.
\end{lemma}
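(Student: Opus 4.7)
The plan is to establish necessity of the conditions (1)--(3) for $m \in M^*_I + M^*_J$, and sufficiency under the hypothesis $|k| \ge 3$, both by coordinate-by-coordinate analysis using Theorem~\ref{theorem:description-of-orbits}.

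\emph{Necessity.} If $m = a + b$ with $a \in M^*_I$ and $b \in M^*_J$, then Theorem~\ref{theorem:description-of-orbits} gives $v(a_{\lambda_i,r_i}) \ge \partial_{\lambda_i} I$ and $v(b_{\lambda_i,r_i}) \ge \partial_{\lambda_i} J$ in every coordinate, and condition (1) is immediate from the ultrametric inequality. For (2), the hypothesis $(\partial_{\lambda_i} I, \lambda_i) \in \max I - J$ says this element lies outside $J$, forcing $\partial_{\lambda_i} J > \partial_{\lambda_i} I$. Theorem~\ref{theorem:description-of-orbits} provides some $r_i$ with $v(a_{\lambda_i,r_i}) = \partial_{\lambda_i} I$, and at that coordinate $v(b_{\lambda_i,r_i})$ is strictly larger, so ultrametric equality yields $v(m_{\lambda_i,r_i}) = \partial_{\lambda_i} I$; together with (1) this gives (2). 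Condition (3) follows by symmetry.

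\emph{Sufficiency.} Given $m$ satisfying (1)--(3), I construct $a \in M^*_I$ and $b \in M^*_J$ with $a+b = m$ one column $\lambda_i$ at a time. Write $\alpha = \partial_{\lambda_i} I$ and $\beta = \partial_{\lambda_i} J$. The default decomposition, which assigns $m_{\lambda_i,r_i}$ entirely to whichever of $a$ or $b$ has the smaller boundary in this column, satisfies the lower-bound requirements of Theorem~\ref{theorem:description-of-orbits}. What remains is to enforce the max-conditions: if $(\alpha, \lambda_i) \in \max I$, some $r_i$ must satisfy $v(a_{\lambda_i,r_i}) = \alpha$, and analogously for $b$. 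When $(\alpha, \lambda_i) \in \max I - J$, condition (2) furnishes a coordinate with $v(m_{\lambda_i,r_i}) = \alpha$ and the default places this valuation on $a$; condition (3) handles the symmetric case for $b$. When $(\alpha, \lambda_i) \in \max I \cap J$, one checks that $\beta \le \alpha$, and the only nontrivial possibility is $\alpha = \beta$ with $(\alpha, \lambda_i) \in \max I \cap \max J$, where both $a$ and $b$ require a coordinate of valuation exactly $\alpha$ in this column.

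\emph{The delicate case.} The main obstacle is this last situation. When the multiplicity $m_i \ge 2$, reserve two distinct coordinates $r_i^{(1)} \ne r_i^{(2)}$ and adjust the default by setting $a_{\lambda_i, r_i^{(1)}} = \pi^\alpha$, $b_{\lambda_i, r_i^{(1)}} = m_{\lambda_i, r_i^{(1)}} - \pi^\alpha$, and symmetrically with the roles of $a$ and $b$ swapped at $r_i^{(2)}$; both max-conditions are then met and no residue-field hypothesis is needed. When $m_i = 1$, only one coordinate $r_i$ is available, and we must write $m_{\lambda_i, r_i} = \pi^\alpha u + (m_{\lambda_i, r_i} - \pi^\alpha u)$ with both summands of valuation exactly $\alpha$. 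This requires a unit $u \in R$ such that $m_{\lambda_i, r_i} - \pi^\alpha u$ is also of valuation $\alpha$, i.e.\ $u \not\equiv m_{\lambda_i, r_i}/\pi^\alpha \pmod{\pi}$. Two residue classes in $k$ are forbidden, namely $0$ and $m_{\lambda_i,r_i}/\pi^\alpha \bmod \pi$, so a valid $u$ exists precisely when $|k| \ge 3$; this is where the residue-field hypothesis is essential.
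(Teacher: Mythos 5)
Your argument is correct and is essentially the paper's own proof written out in detail: both reduce, via Theorem~\ref{theorem:description-of-orbits}, to a summand-by-summand analysis of sums of shells $\pi^{v}M_i^*$, with the ultrametric inequality giving necessity, and your explicit splitting (two distinct coordinates when $m_i\ge 2$, a unit $u$ avoiding two residue classes when $m_i=1$) supplying exactly the equal-valuation case of the paper's identity (\ref{eq:sum-of-orbits}), which is precisely where the hypothesis on the residue field enters. One small remark: condition (1) of the lemma should read $\min(\partial_{\lambda_i}I,\partial_{\lambda_i}J)$ rather than $\max$ (as Theorem~\ref{theorem:sum-of-orbits} and the subsequent lemma make clear), and your ultrametric bound in the necessity step indeed proves that corrected version.
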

To see why the condition on the residue field is necessary consider the case where $M=\mathbf Z/2\mathbf Z$, and $M^*_I$ is the non-zero orbit (corresponding to the ideal $I$ in $P$ generated $(0,1)$), $M^*_I + M^*_I$ consists only of $0$.
If, on the other hand, the residue field has at least three elements, then it has non-zero elements $x$ and $y$ such that $x+y$ is also non-zero, and this phenomenon does not occur.
\begin{proof}
  [Proof of the lemma]
  Let $M_i$ denote the summand $(R/P^{\lambda_i})^{\oplus m_i}$ of $M$ in the decomposition (\ref{eq:module-form}). 
  Let $M_i^* = M_i - \pi M_i$.
  By Theorem~\ref{theorem:description-of-orbits} it suffices to show that
  \begin{equation}
    \label{eq:sum-of-orbits}
    \pi^k M_i^* + \pi^l M_i^* = 
    \begin{cases}
      \pi^{\min(k,l)}M_i^* & \text{ if } k\neq l,\\
      \pi^kM_i & \text{ if $k=l$ and $|R/P|\geq 3$.}
    \end{cases}
  \end{equation}
  which follows from the well-known non-Archimedean inequality
  \begin{equation*}
    v(x+y) \geq \min (v(x),v(y)),
  \end{equation*}
  and the fact that strict inequality is possible only if $v(x)=v(y)$.
\end{proof}
Together with Theorem~\ref{theorem:description-of-orbits}, the above lemma gives the following description of the set of orbits which occur in $M^*_I+M^*_J$:
\begin{theorem}
  \label{theorem:sum-of-orbits}
  Assume that the residue field of $R$ has at least three elements.
  For ideals $I,J\in J(P)_\lambda$,
  \begin{equation*}
    M^*_I + M^*_J = \coprod_{K\subset I\cup J,\;\max K \supset (\max I - J)\cup (\max J - I)} M^*_K.
  \end{equation*}
\end{theorem}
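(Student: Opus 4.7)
The plan is to read off the theorem as a direct translation of Lemma~\ref{lemma:sum-of-orbits} into orbit language, using Theorem~\ref{theorem:description-of-orbits} to recognize which orbit $M^*_K$ a given element of $M$ belongs to from the valuations of its coordinates. Disjointness of the union on the right is immediate from Theorem~\ref{theorem:orbit-par}, since distinct ideals $K\in J(P)_\lambda$ index disjoint orbits, so only the equality of sets needs proof.

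For the forward inclusion, I would take $m\in M^*_I + M^*_J$ and set $K = I(m)\in J(P)_\lambda$, so $m\in M^*_K$. To see $K\subset I\cup J$: the ideal $K$ is generated by the elements $(v(m_{\lambda_i,r_i}),\lambda_i)$ with $m_{\lambda_i,r_i}\neq 0$, and part~(1) of the Lemma gives $v(m_{\lambda_i,r_i})\geq \max(\partial_{\lambda_i}I,\partial_{\lambda_i}J) = \partial_{\lambda_i}(I\cup J)$, so every such generator lies in $I\cup J$. To see $\max K$ contains $\max I - J$: if $(v,\lambda_i) = (\partial_{\lambda_i}I,\lambda_i)\in \max I - J$, part~(2) of the Lemma yields a coordinate with $v(m_{\lambda_i,r_i}) = v$, so $(v,\lambda_i)$ is a generator of $K$; and it is maximal in $K$ because any strictly larger $(v',k')\in K$ would have to lie in $I\cup J$, but cannot lie in $I$ (by maximality of $(v,\lambda_i)$ in $I$) nor in $J$ (else the order ideal property would force $(v,\lambda_i)\in J$). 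The inclusion $\max J - I\subset \max K$ follows symmetrically from part~(3).

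For the reverse inclusion, I would take $K\subset I\cup J$ with $\max K\supset (\max I - J)\cup (\max J - I)$ and $m\in M^*_K$, and verify conditions~(1)--(3) of the Lemma; the sufficient direction of the Lemma (which is where the hypothesis $|R/P|\geq 3$ is consumed) then places $m$ in $M^*_I + M^*_J$. Condition~(1) holds because $K\subset I\cup J$ yields $\partial_{\lambda_i}K \geq \partial_{\lambda_i}(I\cup J)$, and every coordinate valuation of $m$ is at least $\partial_{\lambda_i}K$ by Theorem~\ref{theorem:description-of-orbits}. For condition~(2), if $(\partial_{\lambda_i}I,\lambda_i)\in \max I - J$ then this point lies in $\max K$ by hypothesis, forcing $\partial_{\lambda_i}K = \partial_{\lambda_i}I$ and producing, again by Theorem~\ref{theorem:description-of-orbits}, some $r_i$ attaining the minimum valuation $\partial_{\lambda_i}I$. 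Condition~(3) is analogous.

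The only non-routine step is the maximality verification in the forward direction: showing that a generator $(v,\lambda_i)$ of $K$ that came from a point of $\max I - J$ remains maximal once one passes from $I$ to the possibly larger ideal $K\subset I\cup J$. Both halves of the hypothesis $(v,\lambda_i)\in \max I - J$ are used there: maximality in $I$ blocks domination by elements of $I$, and non-membership in $J$ blocks domination by elements of $J$ via the order-ideal property.
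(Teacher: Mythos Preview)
Your approach is exactly the paper's: the paper offers no proof beyond the sentence ``Together with Theorem~\ref{theorem:description-of-orbits}, the above lemma gives the following description\ldots,'' and you have simply spelled out that translation. The structure of your argument is correct.

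There is, however, one slip in the boundary bookkeeping. You write
\[
\max(\partial_{\lambda_i}I,\partial_{\lambda_i}J) = \partial_{\lambda_i}(I\cup J),
\]
but this is false: since $\partial_k I = \min\{v\mid (v,k)\in I\}$, a larger ideal has a \emph{smaller} boundary value, so in fact $\partial_{\lambda_i}(I\cup J) = \min(\partial_{\lambda_i}I,\partial_{\lambda_i}J)$. The source of the confusion is that condition~(1) of Lemma~\ref{lemma:sum-of-orbits} as printed reads ``$\max$,'' which is itself a typo: the proof of that Lemma (via $\pi^k M_i^* + \pi^l M_i^* \subset \pi^{\min(k,l)}M_i$) and the parallel condition~(1) in the next lemma both make clear that ``$\min$'' is intended. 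With ``$\min$'' in place, your forward inclusion goes through verbatim, and your verification of condition~(1) in the reverse inclusion (where you correctly derive $v(m_{\lambda_i,r_i})\geq \partial_{\lambda_i}K \geq \partial_{\lambda_i}(I\cup J)$) now genuinely matches what the Lemma requires. The remaining steps---the maximality argument for $(v,\lambda_i)\in\max K$ and the verification of conditions~(2) and~(3)---are fine as written.
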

In the following lemma the restriction on the residue field of $R$ in Lemma~\ref{lemma:sum-of-orbits} is not needed:
\begin{lemma}
  For ideals $I$ and $J$ in $J(P)_\lambda$, an element $(m_{\lambda_i,r_i})$ is in $M^*_I + M_J$ if and only if the following conditions are satisfied:
  \begin{enumerate}
  \item $v(m_{\lambda_i,r_i})\geq \min(\partial_{\lambda_i}I,\partial_{\lambda_i}J)$.
  \item If $(\partial_{\lambda_i}I,\lambda_i)\in \max I - J$, then $\min_{r_i} v(m_{\lambda_i,r_i}) = \partial_{\lambda_i}I$.
  \end{enumerate}
\end{lemma}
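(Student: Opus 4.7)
The plan is to handle the two implications separately: necessity will follow from a direct valuation argument, while sufficiency requires an explicit coordinate-by-coordinate construction of an $x\in M^*_I$ and a $y\in M_J$ with $m=x+y$. The reason no residue-field hypothesis is needed here (in contrast to Lemma~\ref{lemma:sum-of-orbits}) is that $M_J$ is a full $G$-invariant submodule rather than a single orbit, so we have enough freedom to absorb discrepancies into $y$ without being forced into a cancellation that would require a third nonzero residue.

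For necessity, given $m=x+y$ with $x\in M^*_I$ and $y\in M_J$, I would invoke Theorem~\ref{theorem:description-of-orbits} and (\ref{eq:M_I}) to get $v(x_{\lambda_i,r_i})\geq \partial_{\lambda_i}I$ and $v(y_{\lambda_i,r_i})\geq \partial_{\lambda_i}J$, and then combine via the non-Archimedean inequality to deduce condition (1). For condition (2), the hypothesis $(\partial_{\lambda_i}I,\lambda_i)\notin J$ forces $\partial_{\lambda_i}I<\partial_{\lambda_i}J$; at a coordinate $r_i^*$ where $x$ attains the minimum valuation $\partial_{\lambda_i}I$ (available from the maximality of $(\partial_{\lambda_i}I,\lambda_i)$ in $I$), the $y$-contribution has strictly larger valuation, so $v(m_{\lambda_i,r_i^*})=\partial_{\lambda_i}I$, and condition (1) prevents any other coordinate from dipping below this value.

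For sufficiency, I would work one summand $M_i=(R/P^{\lambda_i})^{\oplus m_i}$ at a time, splitting on whether $(\partial_{\lambda_i}I,\lambda_i)\in J$ (equivalently, on whether $\partial_{\lambda_i}J\leq \partial_{\lambda_i}I$). If this point does lie in $J$, the naive split $x_{\lambda_i,\cdot}=0$, $y_{\lambda_i,\cdot}=m_{\lambda_i,\cdot}$ already keeps $y$ admissible by condition (1); if moreover the point is maximal in $I$, I would surgically alter a single coordinate by setting $x_{\lambda_i,r_i^*}=\pi^{\partial_{\lambda_i}I}$ and $y_{\lambda_i,r_i^*}=m_{\lambda_i,r_i^*}-\pi^{\partial_{\lambda_i}I}$, using $\partial_{\lambda_i}J\leq \partial_{\lambda_i}I$ to see that the altered $y$-coordinate still has valuation at least $\partial_{\lambda_i}J$. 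If instead $(\partial_{\lambda_i}I,\lambda_i)\notin J$, I would assign each coordinate $m_{\lambda_i,r_i}$ entirely to $x$ when $v(m_{\lambda_i,r_i})<\partial_{\lambda_i}J$ and entirely to $y$ otherwise; condition (1) bounds the $x$-valuations from below by $\partial_{\lambda_i}I$, and when $(\partial_{\lambda_i}I,\lambda_i)\in\max I$ (so $(\partial_{\lambda_i}I,\lambda_i)\in\max I-J$), condition (2) supplies a coordinate of valuation exactly $\partial_{\lambda_i}I<\partial_{\lambda_i}J$ that lands on the $x$-side.

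The main subtlety is the first sub-case, where hypothesis (2) is silent because $(\partial_{\lambda_i}I,\lambda_i)$ already lies in $J$, yet $x$ must still realize the valuation $\partial_{\lambda_i}I$ whenever that point is in $\max I$. This is handled by inserting a synthetic $\pi^{\partial_{\lambda_i}I}$ into one coordinate of $x$; the slack to absorb the compensating adjustment into $y$ comes precisely from $\partial_{\lambda_i}J\leq \partial_{\lambda_i}I$, which explains why the argument dispenses with the residue-field restriction needed in Lemma~\ref{lemma:sum-of-orbits}.
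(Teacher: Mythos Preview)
Your argument is correct and is essentially the same as the paper's, which reduces block-wise to the identity $\pi^k M_i + \pi^l M_i^* = \pi^k M_i$ when $k\leq l$ and $=\pi^l M_i^*$ when $k>l$; your Case~A/Case~B split is precisely the element-level verification of this identity, and your observation that the $M_J$ side (being a submodule rather than a single orbit) absorbs the compensating term without any residue-field restriction is exactly why the $k=l$ pathology of Lemma~\ref{lemma:sum-of-orbits} disappears here.
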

\begin{proof}
  The proof is similar to that of Lemma~\ref{lemma:sum-of-orbits}, except that instead of (\ref{eq:sum-of-orbits}), we use:
  \begin{equation*}
    \pi^k M_i + \pi^l M_i^* = 
    \begin{cases}
      \pi^k M_i & \text{ if $k\leq l$},\\
      \pi^lM_i^* & \text{ if $k>l$.}
    \end{cases}
  \end{equation*}
\end{proof}
The above lemma allows us to describe the sum of an orbit and a characteristic submodule:
\begin{theorem}
  \label{theorem:sum-of-orbit-and-submodule}
  For ideals $I,J\in J(P)_\lambda$,
  \begin{equation}
    \label{eq:sum-of-orbit-and-submodule}
    M_I^*+M_J = \coprod_{K\subset I\cup J,\;\max K \supset \max I - J}M^*_K.
  \end{equation}
\end{theorem}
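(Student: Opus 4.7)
The plan is to reduce the theorem to deciding, for each $K \in J(P)_\lambda$, whether $M^*_K \subset M_I^* + M_J$, and to settle this by combining the coordinate characterization of $M_I^* + M_J$ supplied by the preceding lemma with the coordinate description of $M^*_K$ given by Theorem~\ref{theorem:description-of-orbits}. Since $M_I^* + M_J$ is $G$-invariant and $M$ is partitioned by the orbits $M^*_K$, the theorem follows once I establish that $M^*_K \subset M_I^* + M_J$ if and only if $K \subset I \cup J$ and $\max K \supset \max I - J$.

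For the ``if'' direction, fix such a $K$ and an arbitrary $m \in M^*_K$. The inclusion $K \subset I \cup J$ gives $\partial_{\lambda_i} K \geq \partial_{\lambda_i}(I \cup J) = \min(\partial_{\lambda_i} I, \partial_{\lambda_i} J)$, so the bound $v(m_{\lambda_i, r_i}) \geq \partial_{\lambda_i} K$ from Theorem~\ref{theorem:description-of-orbits} at once yields condition~(1) of the preceding lemma. For condition~(2), any $(\partial_{\lambda_i} I, \lambda_i) \in \max I - J$ lies in $\max K$ by hypothesis, so $\partial_{\lambda_i} K = \partial_{\lambda_i} I$ and the equality clause of Theorem~\ref{theorem:description-of-orbits} delivers $\min_{r_i} v(m_{\lambda_i, r_i}) = \partial_{\lambda_i} I$.

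For the ``only if'' direction, I plan to extract the combinatorial conditions on $K$ by exhibiting suitable test elements of $M^*_K$. To obtain $K \subset I \cup J$, for each $\lambda_i$ appearing in $K$ I construct an $m \in M^*_K$ with some coordinate of valuation exactly $\partial_{\lambda_i} K$ (for instance, by augmenting the canonical representative $m(K)$ of Theorem~\ref{theorem:orbit-par} with an entry $\pi^{\partial_{\lambda_i} K}$ in an unused slot of the $\lambda_i$-summand, which leaves $I(m) = K$ intact); condition~(1) applied at that coordinate forces $\partial_{\lambda_i} K \geq \partial_{\lambda_i}(I \cup J)$. To obtain $\max K \supset \max I - J$, I suppose some $(\partial_{\lambda_i} I, \lambda_i) \in \max I - J$ fails to lie in $\max K$ and split into two cases: either $\partial_{\lambda_i} K > \partial_{\lambda_i} I$, in which case every $m \in M^*_K$ violates condition~(2); or no element of $\max K$ has second coordinate $\lambda_i$, in which case $m(K)$ has all $\lambda_i$-coordinates zero and again violates~(2).

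The main obstacle is the converse direction's reliance on carefully chosen test elements: one needs to know $M^*_K$ is rich enough to prescribe valuations at individual coordinates, and alternatively to force all coordinates with a given second index to vanish while preserving $I(m) = K$. Both constructions follow directly from Theorem~\ref{theorem:description-of-orbits}, but they are what bridges the coordinate-valuation conditions of the preceding lemma and the ideal-theoretic conditions on $K$ appearing in the statement.
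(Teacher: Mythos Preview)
Your proposal is correct and follows essentially the same approach as the paper: combine the coordinate characterization of $M_I^*+M_J$ from the preceding lemma with the orbit description in Theorem~\ref{theorem:description-of-orbits} to decide exactly which $M_K^*$ occur. The paper in fact gives no further detail beyond stating that the theorem follows from the lemma (just as Theorem~\ref{theorem:sum-of-orbits} is deduced from Lemma~\ref{lemma:sum-of-orbits} together with Theorem~\ref{theorem:description-of-orbits}); your write-up simply makes the translation explicit, including the converse direction via test elements, which is a reasonable elaboration.
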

\section{Stabilizers of $m(I)$'s}
\label{sec:stab-canon-forms}

By Theorem~\ref{theorem:orbit-par}, every $G$-orbit of pairs of elements $(m_1,m_2)\in M^2$ contains a pair of the form $(m(I),m)$, for some $I\in J(P)_\lambda$.
Now fix an ideal $I\in J(P)_\lambda$.
Let $G_I$ denote the stabilizer in $G$ of $m(I)$.
Then the $G$-orbits of pairs in $M^2$ which contain an element of the form $(m(I),m)$ are in bijective correspondence with $G_I$-orbits in $M$.
In this section, we give a description of $G_I$ which facilitates the classification of $G_I$-orbits in $M$.

The main idea here is to decompose $M$ into a direct sum of two $R$-modules (this decomposition depends on $I$):
\begin{equation}
  \label{eq:decomposition}
  M = M'\oplus M'',
\end{equation}
where $M'$ consists of those cyclic summands in the decomposition (\ref{eq:module-form}) of $M$ where $m(I)$ has non-zero coordinates, and $M''$ consists of the remaining cyclic summands.
In the running example with $M$ given by (\ref{eq:fapg-eg}), and $I$ the ideal in Figure~\ref{fig:ideal-example}, we have
\begin{equation*}
  M' = \Z/p^4\Z\oplus \Z/p\Z,\quad M'' = Z/p^5\Z\oplus\Z/p^4\Z\oplus\Z/p^2\Z.
\end{equation*}
Note that $m(I)\in M'$.
The reason for introducing this decomposition is that the description of the stabilizer of $m(I)$ in the automorphism group of $M'$ is quite nice:
\begin{lemma}
  The stabilizer of $m(I)$ in $\Aut_R(M')$ is
  \begin{equation*}
    G'_I = \{\id{M'}+u\mid u\in \End_RM'\text{ satisfies }u(m(I))=0\}.
  \end{equation*}
\end{lemma}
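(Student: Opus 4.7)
One inclusion is essentially tautological: whenever $u\in\End_R M'$ satisfies $u(m(I))=0$, then $(\id{M'}+u)(m(I))=m(I)$, and conversely any $g\in\Aut_R M'$ fixing $m(I)$ has the form $\id{M'}+u$ with $u:=g-\id{M'}$ killing $m(I)$. The real content of the lemma is therefore the claim that whenever $u\in\End_R M'$ satisfies $u(m(I))=0$, the endomorphism $\id{M'}+u$ is automatically invertible.

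To establish this, I set up coordinates. Writing $\max I=\{(v_1,k_1),\dots,(v_s,k_s)\}$, the construction of $m(I)$ gives $M'=\bigoplus_{j=1}^s Re_j$ with $\mathrm{ann}(e_j)=P^{k_j}$ and $m(I)=\sum_j \pi^{v_j}e_j$. A key preliminary observation is that the $k_j$ are pairwise distinct: when $k_j=k_{j'}$ the order on $\{(v,k_j)\mid v<k_j\}$ reduces to the opposite of the usual order on $v$, so two elements of $\max I$ with equal second coordinates would be comparable, contradicting the antichain property. I order the summands so that $k_1>\cdots>k_s$. Expanding $u(e_j)=\sum_{j'} a_{j',j}e_{j'}$ with $a_{j',j}\in\Hom_R(R/P^{k_j},R/P^{k_{j'}})$, the condition $u(m(I))=0$ becomes
\begin{equation*}
  \sum_{j}\pi^{v_j} a_{j',j}\equiv 0\pmod{\pi^{k_{j'}}}\qquad\text{for each } j'.
\end{equation*}

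The main calculation is the valuation estimate that, for $j\neq j'$, $\pi^{v_j} a_{j',j}\in\pi^{v_{j'}+1}R/P^{k_{j'}}$. Both cases use the incomparability of $(v_j,k_j)$ and $(v_{j'},k_{j'})$ in $P$. If $k_j>k_{j'}$, then incomparability forces $v_j>v_{j'}$, and the claim is immediate because $a_{j',j}$ ranges over all of $R/P^{k_{j'}}$. If $k_j<k_{j'}$, then $a_{j',j}$ is automatically divisible by $\pi^{k_{j'}-k_j}$ (it represents a homomorphism from a shorter cyclic module to a longer one), while incomparability yields $k_{j'}-v_{j'}>k_j-v_j$, so $v_j+(k_{j'}-k_j)\ge v_{j'}+1$. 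Isolating the $j=j'$ term in the displayed congruence then gives $a_{j',j'}\in\pi R/P^{k_{j'}}$.

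Modulo $\pi$, the matrix $\bar A$ of $u$ on $M'/\pi M'\cong k^s$ therefore has zero diagonal, and its off-diagonal entry $\bar a_{j',j}$ vanishes whenever $k_j<k_{j'}$; in the chosen ordering this makes $\bar A$ strictly lower triangular, so $I+\bar A$ is lower unitriangular, hence invertible. Thus $\id{M'}+u$ is surjective modulo $\pi$; by Nakayama's lemma (since $M'$ is finitely generated and $\pi$ lies in the Jacobson radical of $R$) it is surjective, and a surjective endomorphism of a finitely generated module over the Noetherian ring $R$ is automatically injective, hence an automorphism. The step requiring the most care is the valuation estimate, since the antichain condition has to be translated into clean arithmetic inequalities on the pairs $(v_j,k_j)$ via the nonstandard order of $P$.
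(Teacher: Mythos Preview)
Your argument is correct and shares its core with the paper's: both establish, via the antichain condition and a valuation estimate on the off-diagonal terms, that each diagonal entry $a_{j',j'}$ of $u$ lies in $P$. The paper phrases this as a contradiction (if $u_{ii}(1)$ were a unit, some off-diagonal term would force $(v_i,k_i)\leq(v_j,k_j)$), whereas you compute directly that every off-diagonal contribution $\pi^{v_j}a_{j',j}$ lands in $\pi^{v_{j'}+1}R/P^{k_{j'}}$; these are the same computation read in opposite directions.

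The genuine difference is in the finishing step. The paper deduces from ``all diagonal entries lie in $P$'' that $u$ belongs to the Jacobson radical of $\End_R M'$ (citing \cite{DPS}), hence is nilpotent, so $\id{M'}+u$ is invertible via the geometric series. You instead observe that the structural vanishing of $\bar a_{j',j}$ for $k_j<k_{j'}$ makes $\bar A$ strictly lower triangular on $M'/\pi M'$, then invoke Nakayama and the fact that surjective endomorphisms of finitely generated modules over commutative rings are bijective. Your route is more self-contained (no external reference for the radical description) and goes straight to invertibility without passing through nilpotency; the paper's route yields the slightly stronger conclusion that $u$ itself is nilpotent, which is packaged separately as Lemma~\ref{lemma:nilpotent}.
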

\begin{proof}
  Obviously, the elements of $G'_{I}$ are all the elements of $\End_RM'$ which map $m(I)$ to itself.
  The only thing to check is that they are all invertible.
  For this, it suffices to show that if $u(m(I))=0$, then $u$ is nilpotent, which will follow from Lemma~\ref{lemma:nilpotent} below.
\end{proof}
\begin{lemma}
  \label{lemma:nilpotent}
  For any $R$-module of the form 
  \begin{equation*}
    L = R/P^{\mu_1}\oplus\dotsb\oplus R/P^{\mu_m},
  \end{equation*}
  with $\mu_1>\dotsb>\mu_m$,
  and $x=(\pi^{v_1},\dotsc,\pi^{v_m})\in L$ such that the set
  \begin{equation*}
    (v_1,\mu_1),\dotsc,(v_m,\mu_m)
  \end{equation*}
  is an antichain in $P$, if $u\in \End_RL$ is such that $u(x)=0$, then $u$ is nilpotent. 
\end{lemma}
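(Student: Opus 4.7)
The plan is to reduce to showing the induced $k$-linear map $\bar u$ on $L/\pi L$ is nilpotent. Since $P^{\mu_1}L=0$, if $\bar u^N=0$ then $u^N(L)\subseteq \pi L$, and iterating gives $u^{N\mu_1}(L)\subseteq \pi^{\mu_1}L=0$, so $u$ itself is nilpotent.

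Represent $u$ by a matrix $(u_{ji})$ with $u_{ji}\in\Hom_R(R/P^{\mu_i},R/P^{\mu_j})$, i.e., multiplication by an element of $R/P^{\mu_j}$ of valuation at least $\max(\mu_j-\mu_i,0)$. Since $\mu_1>\cdots>\mu_m$, for $j<i$ we have $v(u_{ji})\geq \mu_j-\mu_i\geq 1$, so $\bar u_{ji}=0$. Thus $\bar u$ is lower triangular, and it suffices to show that each diagonal entry $\bar u_{jj}$ vanishes.

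The antichain hypothesis is used to control the diagonal. A short case analysis of the incomparability of $(v_i,\mu_i)$ and $(v_j,\mu_j)$ in $P$, combined with $\mu_i>\mu_j$ for $i<j$, forces both $v_i>v_j$ and $\mu_i-v_i>\mu_j-v_j$ whenever $i<j$; that is, both sequences $(v_i)$ and $(\mu_i-v_i)$ are strictly decreasing in $i$. This is the key combinatorial input.

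Now for each $j$ write the $j$-th coordinate of $u(x)=0$ as $\sum_i u_{ji}\pi^{v_i}\equiv 0\pmod{P^{\mu_j}}$, and bound each off-diagonal term: for $i<j$ the strict decrease of $(v_i)$ gives $v(u_{ji}\pi^{v_i})\geq v_i\geq v_j+1$; for $i>j$ the bound $v(u_{ji})\geq \mu_j-\mu_i$ together with $\mu_j-v_j>\mu_i-v_i$ rearranges to $v(u_{ji}\pi^{v_i})\geq (\mu_j-\mu_i)+v_i\geq v_j+1$. Since $v_j<\mu_j$, the congruence modulo $P^{\mu_j}$ then forces $u_{jj}\pi^{v_j}$ to have valuation at least $v_j+1$ in $R/P^{\mu_j}$, so $v(u_{jj})\geq 1$ and $\bar u_{jj}=0$. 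The main care point throughout is keeping the orientations straight — the $\mu_i$ decrease, while the partial order on $P$ reverses the valuation coordinate — but once the ``both sequences decrease'' consequence of the antichain condition is extracted, the remaining valuation arithmetic is routine.
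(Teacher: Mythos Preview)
Your proof is correct and follows the same core idea as the paper: show that each diagonal entry $u_{jj}$ lies in $P$, and conclude nilpotence. The paper reaches the diagonal conclusion by invoking Theorem~\ref{theorem:ideals-homomorphisms} (if some off-diagonal term had valuation $\leq v_j$, that would force a comparability in $P$), whereas you extract the two strict monotonicity facts $v_1>\cdots>v_m$ and $\mu_1-v_1>\cdots>\mu_m-v_m$ from the antichain hypothesis and do the valuation arithmetic directly; these are equivalent arguments.

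Where your version genuinely differs is in the passage from ``diagonal entries in $P$'' to nilpotence. The paper simply asserts that this places $u$ in the Jacobson radical of $\End_R L$, citing an external reference. You instead observe that for $j<i$ the entry $u_{ji}$ automatically has positive valuation (since $\mu_j>\mu_i$), so the reduction $\bar u$ on $L/\pi L$ is strictly lower triangular over $k$, hence $\bar u^m=0$, and then $u^{m\mu_1}=0$. This makes your argument fully self-contained and slightly more informative (it gives an explicit nilpotence bound), at the cost of using the extra structural fact about the off-diagonal blocks that the paper's radical citation absorbs.
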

\begin{proof}
  Write $u$ as a matrix $(u_{ij})$, where $u_{ij}:R/P^{\lambda_j}\to R/P^{\lambda_i}$.
  We have
  \begin{equation*}
    u(\pi^{v_1},\dotsc,\pi^{v_m})_i = u_{ii}(\pi^{v_i}) + \sum_{j\neq i}u_{ij}(\pi^{v_j}) =0,
  \end{equation*}
  for $1\leq i\leq m$.
  If $u_{ii}1$ is a unit, then $u_{ii}\pi^{v_i}$ has valuation $v_i$, hence at least one of the summands $u_{ij}\pi^{v_j}$ must have valuation $v_i$ or less.
  It follows from Theorem~\ref{theorem:ideals-homomorphisms} (applied to $M=R/P^{\mu_j}$ and $N=R/P^{\mu_i}$) that $(v_i,\mu_i)\leq (v_j,\mu_j)$ contradicting the antichain hypothesis.
  Thus, for each $i$, $u_{ii}(1)\in PR/P^{\mu_i}$.
  It follows that $u$ lies in the radical of the ring $\End_R L$ (see Dubey, Prasad and Singla \cite[Section~6]{DPS}), and therefore $u$ is nilpotent.
\end{proof}
Every endomorphism of $M$ can be written as a matrix $\mat xyzw$, where $x:M'\to M'$, $y:M''\to M'$, $z:M''\to M'$ and $w:M''\to M''$ are homomorphisms.

We are now ready to describe the stabilizer of $m(I)$ in $M$:
\begin{theorem}
  \label{theorem:stabilizer}
  The stabilizer of $m(I)$ in $G$ consists of matrices of the form
  \begin{equation*}
    \mat{\id{M'}+u}yzw, 
  \end{equation*}
  where $u\in \End_RM'$ satisfies $u(m(I))=0$,  $y\in \Hom_R(M'',M')$ is arbitrary, $z\in \Hom_R(M',M'')$ satisfies $z(m(I))=0$, and $w\in \End_R(M'')$ is invertible.
\end{theorem}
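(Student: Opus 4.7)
The plan is to exploit the block form $g = \mat xyzw$ induced by $M = M'\oplus M''$ and to separate the condition that $g$ fix $m(I)$ from the condition that $g$ be invertible.

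Since $m(I) \in M'$, one computes $g(m(I)) = (x(m(I)), z(m(I)))$, so $g(m(I)) = m(I)$ is equivalent to the two conditions $x(m(I)) = m(I)$ and $z(m(I)) = 0$. The preceding lemma then gives $x = \id{M'} + u$ with $u(m(I)) = 0$; by Lemma~\ref{lemma:nilpotent}, such $u$ is nilpotent, so $x$ is automatically a unit of $\End_R M'$. With $x$ invertible, the block factorization
\[
  g = \mat{\id{M'}}{0}{zx\inv}{\id{M''}}\mat{x}{y}{0}{w - zx\inv y}
\]
has invertible first factor, so $g \in \Aut_R M$ iff $w - zx\inv y \in \Aut_R M''$. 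The theorem thus reduces to showing that $w$ and $w - zx\inv y$ are simultaneously invertible, which will follow once I show that $zx\inv y$ lies in the Jacobson radical of $\End_R M''$.

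To prove this, I would expand $x\inv = \sum_{a\geq 0}(-u)^a$ (finite by nilpotence) and examine the ``same-exponent'' entries $(zx\inv y)_{kk'}(1)$ for summands $R/P^{\nu_k}, R/P^{\nu_{k'}}$ of $M''$ with $\nu_k = \nu_{k'}$; by the radical description used in Lemma~\ref{lemma:nilpotent} (cf.~\cite{DPS}), these entries must land in $PR/P^{\nu_k}$. Each is a sum of products $y_{j_0 k'}(1)\, u_{j_1 j_0}(1) \cdots u_{j_a j_{a-1}}(1)\, z_{k j_a}(1)$, and the standard $\Hom$-space bounds on the $\pi$-adic valuations of the factors show that the total valuation is at least the positive variation of the chain $\nu_{k'}, k_{j_0},\dotsc,k_{j_a}, \nu_k$. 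Since $\nu_k = \nu_{k'}$, a non-constant chain already has positive variation $\geq 1$, so the product lies in $P$.

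The main obstacle is the constant chain. Because the $k_j$'s indexing elements of $\max I$ are distinct, a constant chain forces all $j_\ell$ to coincide with a single $j_0$ satisfying $k_{j_0}=\nu_k$. Here I would invoke $z(m(I))=0$, whose $k$-th component reads $\sum_j \pi^{v_j} z_{kj}(1) = 0$ in $R/P^{\nu_k}$, and perform a short antichain analysis on the $(v_j,k_j) \in \max I$: splitting on whether $k_j > \nu_k$ or $k_j < \nu_k$, the incomparability in $P$ of $(v_j,k_j)$ and $(v_{j_0},k_{j_0})$ forces every $j \neq j_0$ term to have $\pi$-adic valuation strictly greater than $v_{j_0}$; feeding this into the sum-is-zero relation then forces $z_{kj_0}(1)\in PR/P^{\nu_k}$, and the product again lies in $PR/P^{\nu_k}$. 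Combining the two cases establishes the claim and completes the proof.
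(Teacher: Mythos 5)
Your proposal is correct, but it establishes the key equivalence (invertibility of $g$ versus invertibility of $w$) by a genuinely different route. The paper first treats the homogeneous case $M=(R/P^k)^n$, where $u(m(I))=0$ and $z(m(I))=0$ directly force $u$ and the coordinates of $z$ into $P$, so the matrix is block upper-triangular modulo $P$, and then reduces the general case to this one by decomposing $M$ into its exponent-homogeneous summands and using the fact that an endomorphism of $M$ is invertible exactly when its diagonal blocks for these summands are invertible. You instead stay with the decomposition $M=M'\oplus M''$: after observing that $x=\id{M'}+u$ is a unit (via the nilpotence of $u$ from Lemma~\ref{lemma:nilpotent}), you factor $g$ through the Schur complement, reducing the claim to showing that $zx\inv y$ lies in the radical of $\End_R(M'')$, and you verify this by valuation estimates along chains of exponents: non-constant chains pick up positive variation at least $1$, while the constant-chain case is handled by extracting from $z(m(I))=0$ and the antichain property of $\max I$ that the equal-exponent entries $z_{kj}(1)$ are divisible by $\pi$. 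Both arguments ultimately rest on the same radical description of endomorphism rings from \cite{DPS} and on the antichain structure of $\max I$; the paper's reduction is shorter because it delegates the cross-term analysis to the homogeneous-block criterion (its final sentence leaves those estimates implicit), whereas your factorization makes them explicit and isolates the reusable fact that $z(m(I))=0$ forces the same-exponent entries of $z$ into $P$. Your chain-by-chain estimates are correct as sketched (the antichain inequalities $v_j>v_{j_0}$ for $k_j>k_{j_0}$ and $k_{j_0}-k_j>v_{j_0}-v_j$ for $k_j<k_{j_0}$ give exactly the strict bound you need), so the proposal stands as a complete alternative proof.
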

\begin{proof}
  Clearly, all the endomorphisms of $M$ which fix $m(I)$ are of the form stated in the theorem, except that $w$ need not be invertible.
  We need to show that the invertibility of such an endomorphism is equivalent to the invertibility of $w$.

  To begin with, consider the case where $M=(R/P^k)^n$ for some positive integer $k$ and some cardinal $n$.
  Then, if $m(I)\neq 0$ (the case $m(I)=0$ is trivial), then $M'=R/P^k$, and $M''=(R/P^k)^{n-1}$.
  The endomorphisms which fix $m(I)$ are all of the form
  \begin{equation*}
    \mat{1+u}yzw,
  \end{equation*}
  where $u$, and each coordinate of $z$ lies in $P$.
  Such endomorphisms, being block upper-triangular modulo $P$, are invertible if and only if $w$ is invertible, proving the claim when $M=(R/P^k)^n$.
  In general, $M$ is a sum of such modules, and an endomorphism of $M$ is invertible if and only if its diagonal block corresponding to each of these summands is invertible.
  Therefore the claim follows in general as well.
\end{proof}
\begin{cor}
  [Independence of multiplicities larger than two]
  \label{cor:independence of multiplicities}
  Consider the partition $\lambda^{(m)}$ derived from $\lambda$ by:
  \begin{equation*}
    \lambda^{(m)} = (\lambda_1^{\min(m_1,m)}, \lambda_2^{\min(m_2,m)}, \dotsc, \lambda_l^{\min(m_l,m)}).
  \end{equation*}
  Let $M_m$ denote the $R$-module corresponding to $\lambda^{(m)}$, with automorphism group $G_m$.
  Then the standard inclusion map $M_2\hookrightarrow M$ induces a bijection
  \begin{equation}
    \label{eq:inclusion}
    G_2\backslash M_2\times M_2 \tilde\to G\backslash M\times M. 
  \end{equation}
\end{cor}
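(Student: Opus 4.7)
My plan is to reduce the statement to an extension principle for partial isomorphisms between submodules, which I can combine with Theorem~\ref{theorem:orbit-par} and Theorem~\ref{theorem:stabilizer}. The first step is to normalize the first coordinate of a pair. By Theorem~\ref{theorem:orbit-par} applied to both $M$ and $M_2$, together with the facts that $J(P)_\lambda = J(P)_{\lambda^{(2)}}$ (depending only on the distinct parts of $\lambda$) and that $m(I) \in M_2$ for every $I$ (since it uses at most one coordinate per block), every $G$-orbit and every $G_2$-orbit of pairs contains a representative of the form $(m(I), b)$. Therefore it suffices to show that for each $I \in J(P)_\lambda$, the map $(G_2)_I \backslash M_2 \to G_I \backslash M$ induced by inclusion is a bijection, where $G_I$ and $(G_2)_I$ denote the stabilizers of $m(I)$ in $G$ and $G_2$ respectively.

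Surjectivity of this map follows directly from Theorem~\ref{theorem:stabilizer}. The subgroup $\{\mat{\id{M'}}{0}{0}{w} : w \in \Aut_R(M'')\}$ of $G_I$ acts only on the $M''$-component of any $b \in M$, and a Smith-normal-form reduction applied blockwise (using the identification $\Aut_R((R/P^{\lambda_i})^k) = GL_k(R/P^{\lambda_i})$) produces $w \in \Aut_R(M'')$ moving the $M''$-component of $b$ into $M''_2 := M_2 \cap M''$, placing $b$ inside $M_2$.

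For injectivity, suppose $b_1, b_2 \in M_2$ and $g \in G_I$ satisfy $g(b_1) = b_2$. Let $N = \langle m(I), b_1 \rangle$, $N' = \langle m(I), b_2 \rangle$, and let $\phi: N \to N'$ be the $R$-linear isomorphism determined by $\phi(m(I)) = m(I)$ and $\phi(b_1) = b_2$. Since $g|_N = \phi$, the map $\phi$ extends to an automorphism of $M$. I would then invoke the classical extension principle (Kaplansky's \emph{Infinite Abelian Groups}, adapted to DVR modules of bounded order): an isomorphism between submodules of a bounded-order $R$-module $L$ extends to $\Aut_R(L)$ if and only if it preserves, element by element, the ideal $I_L(x)$, or equivalently (by Theorem~\ref{theorem:ideals-and-orbits}) the $\Aut_R(L)$-orbit of each element of $N$. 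The crucial observation is that for $x \in M_2 \subset M = M_2 \oplus M_3$, the ideal $I_M(x)$ is determined by the valuations of the nonzero coordinates of $x$, which all lie in $M_2$, so $I_M(x) = I_{M_2}(x)$. Hence $\phi$, extendable in $M$ via $g$, is also extendable in $M_2$, producing $g_2 \in (G_2)_I$ with $g_2(b_1) = b_2$.

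The main obstacle is the extension principle itself, which is not stated in the excerpt. I would either cite Kaplansky's classical theorem for bounded abelian $p$-groups (which transfers immediately to DVR modules of bounded order) or give a self-contained proof by induction on the minimum number of generators of $N$, with Theorem~\ref{theorem:ideals-and-orbits} supplying the cyclic base case and a careful gluing argument handling the inductive step.
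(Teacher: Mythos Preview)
Your surjectivity argument is the paper's: normalize the first coordinate to $m(I)$ via Theorem~\ref{theorem:orbit-par}, then use the $\Aut_R(M'')$-factor of $G_I$ from Theorem~\ref{theorem:stabilizer} to push the second coordinate into $M'\oplus M''_1\subset M_2$.

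For injectivity the two arguments genuinely diverge. You pass through Kaplansky's extension theorem: from $g\in G_I$ carrying $b_1$ to $b_2$ you extract the isomorphism $\phi$ between $\langle m(I),b_1\rangle$ and $\langle m(I),b_2\rangle$, observe that $I_M(x)=I_{M_2}(x)$ for every $x\in M_2$ (since $\lambda$ and $\lambda^{(2)}$ have the same distinct parts), deduce that $\phi$ preserves heights in $M_2$, and conclude that $\phi$ extends to some $g_2\in(G_2)_I$. The paper does something much more direct: it splits $M=M_2\oplus N$, writes $g\in G$ as a block matrix $\left(\begin{smallmatrix} g_{11}&g_{12}\\ g_{21}&g_{22}\end{smallmatrix}\right)$ with respect to this decomposition, and notes that $g_{11}$ alone already sends $(x_1,y_1)\in M_2\times M_2$ to $(x_2,y_2)$. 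Your route costs an external structure theorem not available in the paper, but it makes the orbit-preservation criterion explicit and extends verbatim to $n$-tuples (with $M_n$ replacing $M_2$); the paper's projection is a one-line self-contained argument, though it leaves implicit why the corner $g_{11}$ lies in $G_2$ rather than merely in $\End_R(M_2)$.
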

\begin{proof}
  We shall use the fact that the canonical forms $m(I)$ of Theorem~\ref{theorem:orbit-par} lie in $M_1\subset M$.
  Thus given a pair $(x,y)\in M\times M$, we can reduce $x$ to $m(I)\in M_1$ using automorphisms of $M$.
  Theorem~\ref{theorem:stabilizer} shows that, while preserving $m(I)$, automorphisms of $M$ can be used to further reduce $y$ to an element of $M'\oplus M''_1\subset M_2$.
  This proves the surjectivity of the map in (\ref{eq:inclusion}).

  To see injectivity, suppose that two pairs $(x_1,y_1)$ and $(x_2,y_2)$ in $M_2\times M_2$ lie in the same $G$-orbit.
  Since $M_2$ is a direct summand of $M$, we can write $M=M_2\oplus N$.
  If $g\in G$ has matrix $\mat {g_{11}}{g_{12}}{g_{21}}{g_{22}}$ with respect to this decomposition, then $g_{11}\in G_2$ also maps $(x_1,y_1)\in M_2\times M_2$ to $(x_2,y_2)\in M_2\times M_2$. 
\end{proof}
\begin{remark}
  Corollary~\ref{cor:independence of multiplicities} and its proof remain valid if we restrict ourselves to $G$-orbits in $M^*_I\times M^*_J$ for order ideals $I,J\in J(P)_\lambda$.
\end{remark}
\section{The stabilizer orbit of an element}
\label{sec:stabilizer-orbit-an}
Let $G_I$ denote the stabilizer of $m(I)\in M$.
Write each element $m\in M$ as $m=(m',m'')$ with respect to the decomposition (\ref{eq:decomposition}) of $M$.
Also, for any $m'\in M'$, let $\bar m'$ denote the image of $m'$ in $M'/Rm(I)$.

Theorem~\ref{theorem:stabilizer} allows us to describe the orbit of $m$ under the action of $G_I$, which is the same as describing the $G$-orbits in $M^2$ whose first component lies in the orbit $M^*_I$ of $m(I)$.
\begin{theorem}
  \label{theorem:sub-orbit}
  Given $m$ and $n$ in $M$, $n$ lies in the $G_I$-orbit of $m$ in $M$ if and only if the following conditions hold:
  \begin{enumerate}
  \item $n'\in m' + M'_{I(\bar m')\cup I(m'')}$.
  \item \label{item:second-condition} $n''\in {M''}^*_{I(m'')}+M''_{I(\bar m')}$.
  \end{enumerate}
\end{theorem}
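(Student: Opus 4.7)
The plan is to read off the orbit $G_I \cdot m$ directly from the block-matrix description of $G_I$ provided by Theorem~\ref{theorem:stabilizer}. That theorem asserts every element of $G_I$ has the form $\mat{\id{M'}+u}{y}{z}{w}$, with $u \in \End_R M'$ satisfying $u(m(I))=0$, $y \in \Hom_R(M'', M')$ arbitrary, $z \in \Hom_R(M',M'')$ satisfying $z(m(I))=0$, and $w \in \Aut_R M''$. Applying such a $g$ to $m = (m',m'')$ yields
\begin{equation*}
  g \cdot m = \bigl(m' + u(m') + y(m''),\; z(m') + w(m'')\bigr).
\end{equation*}
Since the four blocks $u, y, z, w$ range over their respective sets independently, the orbit $G_I \cdot m$ is the Cartesian product of the set of achievable first components and the set of achievable second components, and the argument then reduces to computing these two sets separately.

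For the first component, the constraint $u(m(I))=0$ forces $u$ to factor through the quotient $M'/Rm(I)$, so $\{u(m') : u(m(I))=0\}$ equals the set of homomorphic images of $\bar m' \in M'/Rm(I)$ inside $M'$. By Theorem~\ref{theorem:ideals-homomorphisms} together with the defining formula (\ref{eq:M_I}), this set is exactly $M'_{I(\bar m')}$. The analogous (and easier) argument shows that, as $y$ varies over $\Hom_R(M'', M')$, the image $y(m'')$ ranges over $M'_{I(m'')}$. Theorem~\ref{theorem:characteristic-submodules} identifies the sum of these submodules with $M'_{I(\bar m') \cup I(m'')}$, which is condition (1).

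For the second component, $w \in \Aut_R M''$ means $w(m'')$ ranges over the $\Aut_R(M'')$-orbit of $m''$ in $M''$, which by Theorem~\ref{theorem:orbit-par} is ${M''}^*_{I(m'')}$. The same factorization argument as in the first component shows that $\{z(m') : z(m(I))=0\}$ coincides with $M''_{I(\bar m')}$. Adding these two sets gives ${M''}^*_{I(m'')} + M''_{I(\bar m')}$, establishing condition (\ref{item:second-condition}).

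The argument is essentially mechanical once Theorem~\ref{theorem:stabilizer} is available; the only point requiring care is conceptual rather than computational. One must recognise that it is $I(\bar m')$, not $I(m')$, that controls the contribution of $m'$ to the orbit: the stabilizer condition demands that $u$ and $z$ annihilate $m(I)$, hence that they factor through $M'/Rm(I)$, so the relevant ideal is the one associated to $\bar m'$. Once this is in place, the proof is a direct application of the orbit/submodule dictionary of Section~\ref{sec:orbits-elements} to each of the four blocks in Theorem~\ref{theorem:stabilizer}.
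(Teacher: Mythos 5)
Your proposal is correct and follows essentially the same route as the paper: apply Theorem~\ref{theorem:stabilizer}, note that the conditions $u(m(I))=0$ and $z(m(I))=0$ let $u$ and $z$ factor through $M'/Rm(I)$, identify the achievable values of $u(m')$, $y(m'')$, $z(m')$, $w(m'')$ via the orbit/submodule dictionary (Theorems~\ref{theorem:ideals-homomorphisms} and~\ref{theorem:ideals-and-orbits}), and combine $M'_{I(\bar m')}+M'_{I(m'')}=M'_{I(\bar m')\cup I(m'')}$ using the remark after Theorem~\ref{theorem:characteristic-submodules}. The independence of the four blocks, which you make explicit, is implicit in the paper's ``if and only if'' reformulation, so there is no substantive difference.
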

\begin{proof}
  By Theorem~\ref{theorem:stabilizer}, $n$ lies in the $G_I$-orbit of $m$ if and only if
  \begin{equation*}
    n' = m' + \bar u(\bar m') + y(m'') \text{ and } n'' = \bar z(\bar m') + w(m'')
  \end{equation*}
  for homomorphisms $\bar u \in \Hom_R(M'/Rm(I),M')$, $y \in \Hom_R(M'',M')$,\linebreak $z\in \Hom_R(M'/Rm(I),M'')$ and $w\in \Aut_R(M'')$.
  By Theorems~\ref{theorem:ideals-homomorphisms} and~\ref{theorem:ideals-and-orbits}, this means
  \begin{equation*}
    n' \in  m' + M'_{I(\bar m')} + M'_{I(m'')} \text{ and } n''\in M''_{I(\bar m')} + {M''}^*_{I(m'')}.
  \end{equation*}
  By the remark following Theorem~\ref{theorem:characteristic-submodules}, $M'_{I(\bar m')}+M'_{I(m'')} = M'_{I(\bar m')\cup I(m'')}$, giving the conditions in the lemma.
\end{proof}
Given $m=(m',m'')\in M$, the ideals $I(\bar m')$ and $I(m'')$ may be regarded as combinatorial invariants of $m$.
Now suppose that the residue field $k$ of $R$ is finite of order $q$.
We can now show that, having fixed these combinatorial invariants, the cardinality of the orbit of $m$ is a polynomial in $q$ whose coefficients are integers which do not depend on $R$.
Also, the number of elements of $M$ having these combinatorial invariants is a polynomial in $q$ whose coefficients are integers which do not depend on $R$.
Using these observations, we will be able to conclude that the number of orbits of pairs in $M$ is a polynomial in $q$ whose coefficients are integers which do not depend on $R$.

Let $\lambda'/I$ denote the partition corresponding to the isomorphism class of $M'/Rm(I)$.
The partition $\lambda'/I$ is completely determined by the partition $\lambda'$ and the ideal $I\in J(P)_{\lambda'}$, and is independent of $R$ (see Lemma~\ref{lemma:quotient}).
\begin{theorem}
  \label{theorem:sub-orbit-order}
  Fix $J\in J(P)_{\lambda'/I}$, $K\in J(P_\lambda'')$.
  Then the cardinality of the $G_I$-orbit of any element $m=(m',m'')$ such that $I(\bar m') = J$ and $I(m'')=K$ is given by
  \begin{equation}
    \label{eq:cardinality-of-orbit}
    \alpha_{I,J,K} = |M'_{J\cup K}|\Bigg(\sum_{K'\subset JUK,\;\max K'\supset \max K - J} |{M''}^*_{K'}|\Bigg).
  \end{equation}
\end{theorem}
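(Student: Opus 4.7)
The plan is to combine Theorem~\ref{theorem:sub-orbit}, which describes the $G_I$-orbit of $m=(m',m'')$ as a Cartesian product of two conditions, with Theorem~\ref{theorem:sum-of-orbit-and-submodule}, which decomposes a sum of the form $M^*+M_{(\cdot)}$ into a disjoint union of orbits. The theorem will follow as a direct bookkeeping consequence.

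First, I would set $J=I(\bar m')$ and $K=I(m'')$, and invoke Theorem~\ref{theorem:sub-orbit} to describe the $G_I$-orbit of $m$ as exactly the set of pairs $(n',n'')\in M'\oplus M''$ with $n'\in m'+M'_{J\cup K}$ and $n''\in {M''}^*_K+M''_J$. Since the two membership conditions decouple completely, this set is a genuine Cartesian product, so its cardinality is the product of the sizes of the two factors.

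The first factor is a coset of the submodule $M'_{J\cup K}\subset M'$, and therefore contributes $|M'_{J\cup K}|$ independently of the choice of representative $m'$ with $I(\bar m')=J$. For the second factor I would apply Theorem~\ref{theorem:sum-of-orbit-and-submodule} inside $M''$, with the ideals $I,J$ of that theorem instantiated as $K,J$ respectively, obtaining
\[
{M''}^*_K + M''_J \;=\; \coprod_{\substack{K'\subset J\cup K\\ \max K'\supset\max K - J}} {M''}^*_{K'},
\]
and then simply sum the orbit cardinalities. Multiplying these two counts gives precisely the right-hand side of (\ref{eq:cardinality-of-orbit}).

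I do not expect a real obstacle here, since both ingredients have been proved. The only minor points to note, but not to dwell on, are: (a) the notations $M'_{J\cup K}$ and $M''_J$ are applied to ideals that need not a priori lie in $J(P)_{\lambda'}$ or $J(P)_{\lambda''}$, which is harmless because the definition (\ref{eq:M_I}) depends only on the boundary valuations; and (b) the index set in the displayed disjoint union is understood to range over those $K'$ for which ${M''}^*_{K'}$ is a non-empty $G$-orbit in $M''$, i.e.\ those lying in $J(P)_{\lambda''}$. Neither point requires a new argument.
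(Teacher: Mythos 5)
Your proof is correct and follows exactly the route the paper takes: its proof of Theorem~\ref{theorem:sub-orbit-order} is precisely "a direct consequence of Theorems~\ref{theorem:sum-of-orbit-and-submodule} and~\ref{theorem:sub-orbit}," and you have simply made the product decomposition and the disjoint-union count explicit. No gaps; the two minor notational points you flag are indeed harmless.
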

\begin{proof}
  This is a direct consequence of Theorems~\ref{theorem:sum-of-orbit-and-submodule} and~\ref{theorem:sub-orbit}.
\end{proof}
Applying Theorem~\ref{theorem:cardinality-of-singleton-orbit} and (\ref{eq:order-of-submodule}) to Theorem~\ref{theorem:sub-orbit-order} gives:
\begin{theorem}
  \label{theorem:cardinality-of-orbit}
  When $M$ is finite (and $q$ denotes the cardinality of the residue field of $R$), the cardinality of every $G_I$-orbit in $M$ is of the form $\alpha_{I,J,K}=\alpha_{I,J,K}(q)$ for some $J\in J(P)_{\lambda'/I}$ and some $K\in J(P)_{\lambda''}$.
Each $\alpha_{I,J,K}(q)$ is a monic polynomial in $q$ of degree $[J\cup K]_\lambda$ whose coefficients are integers which are independent of the ring $R$.
\end{theorem}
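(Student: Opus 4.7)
The plan is to start from the explicit formula of Theorem~\ref{theorem:sub-orbit-order} and reduce everything to a polynomial identity in $q$. By that theorem, every $G_I$-orbit in $M$ has cardinality $\alpha_{I,J,K}$ for some $J\in J(P)_{\lambda'/I}$ and $K\in J(P)_{\lambda''}$ (namely the combinatorial invariants $I(\bar m')$ and $I(m'')$ of any representative $m=(m',m'')$), so the existence statement is immediate. For the polynomial claim, I would expand
\[
  \alpha_{I,J,K}(q) = |M'_{J\cup K}|\Bigg(\sum_{K'\subset J\cup K,\;\max K'\supset \max K - J}|{M''}^*_{K'}|\Bigg),
\]
substituting $|M'_{J\cup K}|=q^{[J\cup K]_{\lambda'}}$ from (\ref{eq:order-of-submodule}) applied to $M'=M_{\lambda'}$, and presenting each $|{M''}^*_{K'}|$ via Theorem~\ref{theorem:cardinality-of-singleton-orbit} applied to $M''=M_{\lambda''}$ as a monic polynomial in $q$ of degree $[K']_{\lambda''}$ with integer coefficients depending only on $\lambda''$ and $K'$. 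This alone yields the integrality and $R$-independence of the coefficients of $\alpha_{I,J,K}(q)$.

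The crux, and what I expect to be the main obstacle, is to pin down the leading term of the inner sum so as to conclude monicity. My plan is to observe that $[K']_{\lambda''}$ depends only on $K'\cap P_{\lambda''}$ and is strictly increasing in this intersection with respect to set inclusion; moreover, an ideal $K'\in J(P)_{\lambda''}$ is determined by $K'\cap P_{\lambda''}$ alone, since $\max K'\subset P_{\lambda''}$ coincides with the set of $P$-maximal elements of $K'\cap P_{\lambda''}$, and $K'$ is in turn the order ideal generated by $\max K'$. Combined, these facts single out the dominant $K'$ as $K'_{\max}$, the order ideal generated by the $P$-maximal elements of $(J\cup K)\cap P_{\lambda''}$. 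I would then verify that $K'_{\max}$ lies in the index set: the containment $K'_{\max}\subset J\cup K$ is immediate, and for $\max K'_{\max}\supset \max K-J$ I would argue that any $x\in\max K-J$ cannot be dominated by $y\in J\cup K$, since $y\in K$ contradicts $x\in\max K$ while $y\in J$ forces $x\in J$ by the order-ideal property.

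Granting these points, the inner sum is monic in $q$ of degree $[K'_{\max}]_{\lambda''}=[J\cup K]_{\lambda''}$, so $\alpha_{I,J,K}(q)$ is monic of degree $[J\cup K]_{\lambda'}+[J\cup K]_{\lambda''}$. The final step is to match this with the claimed degree $[J\cup K]_\lambda$: the decomposition $M=M'\oplus M''$ simply partitions the cyclic summands of $M$ between $M'$ and $M''$, so for each part the multiplicity in $\lambda$ is the sum of the multiplicities in $\lambda'$ and $\lambda''$; therefore $[I]_{\lambda'}+[I]_{\lambda''}=[I]_\lambda$ for every ideal $I\subset P$, which finishes the argument.
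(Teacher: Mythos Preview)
Your argument is correct and follows exactly the route the paper indicates: it applies Theorem~\ref{theorem:cardinality-of-singleton-orbit} and (\ref{eq:order-of-submodule}) to the explicit formula of Theorem~\ref{theorem:sub-orbit-order}. You have simply spelled out the monicity and degree computation (identifying the unique top-degree summand $K'_{\max}$ and verifying it lies in the index set) that the paper leaves to the reader.
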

If the sets
\begin{equation*}
  X_{I,J,K}=\{(m',m'')\in M\mid I(\bar m')=J\text{ and } I(m'')=K\}
\end{equation*}
were $G_I$-stable, we could have concluded that $X_{I,J,K}$ consists of 
\begin{equation*}
  \frac{|X_{I,J,K}|}{\alpha_{I,J,K}}
\end{equation*}
many orbits, each of cardinality $\alpha_{I,J,K}$.
However, $X_{I,J,K}$ is not, in general, $G_I$-stable (this can be seen by viewing the condition \ref{item:second-condition} in the context of Theorem~\ref{theorem:sum-of-orbit-and-submodule}).
The following lemma gives us a way to work around this problem:
\begin{lemma}
  \label{lemma:counting-parts}
  Let $S$ be a finite set with a partition $S= \coprod_{i=1}^N S_i$ (for the application we have in mind, these will be the $G_I$-orbits in $M$).
  Suppose that $S$ has another partition $S=\coprod_{j=1}^{Q} T_j$, such that there exist positive integers $n_1,n_2,\dotsc,n_Q$  for which, if $x\in T_j\cap S_i$, then $|S_i|=n_j$ (in our case, the $T_j$'s will be the sets $X_{I,J,K}$).
  Then the number of $i\in \{1,\dotsc, N\}$ such that $|S_i|=n$ is given by
  \begin{equation*}
    \frac 1n\sum_{\{j\mid n_j = n\}}|T_j|.
  \end{equation*}
\end{lemma}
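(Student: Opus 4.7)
The plan is to show that the two collections
\[
\bigcup_{\{j\mid n_j=n\}} T_j \quad\text{and}\quad \bigcup_{\{i\mid |S_i|=n\}} S_i
\]
are in fact equal as subsets of $S$, and then read off the counting identity from this equality together with the fact that the $S_i$'s form a partition.

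First I would establish the containment $\bigcup_{\{i\mid |S_i|=n\}} S_i \subset \bigcup_{\{j\mid n_j=n\}} T_j$. Take $x\in S_i$ with $|S_i|=n$. Since the $T_j$'s partition $S$, there is a unique $j$ with $x\in T_j$. By the hypothesis applied to this $x\in T_j\cap S_i$, we have $n_j = |S_i| = n$, so $x$ lies on the right-hand side. The reverse containment is essentially the same: if $x\in T_j$ with $n_j=n$ and $x\in S_i$, then $x\in T_j\cap S_i$, so the hypothesis forces $|S_i|=n_j=n$, placing $x$ on the left-hand side.

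Now both sides are unions of members of the partition $S=\coprod S_i$, hence they are disjoint unions of the $S_i$'s they contain, and similarly they are disjoint unions of the relevant $T_j$'s. Taking cardinalities gives
\[
\sum_{\{j\mid n_j=n\}}|T_j| \;=\; \sum_{\{i\mid |S_i|=n\}}|S_i| \;=\; n\cdot\#\{i\mid |S_i|=n\},
\]
and dividing by $n$ yields the claimed formula.

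There is no real obstacle here; the only thing to be slightly careful about is the logical direction of the hypothesis, namely that $|S_i|=n_j$ is determined by the block $T_j$ (not by $S_i$), so that $S_i$'s meeting different $T_j$'s with different $n_j$'s cannot occur. This is precisely what forces the two unions above to coincide and makes the double-counting argument go through cleanly.
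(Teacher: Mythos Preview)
Your proof is correct and follows the same idea as the paper's own argument: the paper simply observes that $\coprod_{\{j\mid n_j=n\}}T_j$ coincides with the union of those $S_i$ having $|S_i|=n$, which is exactly the set equality you carefully verify before counting. You have just written out the details the paper leaves implicit.
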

\begin{proof}
  Note that
  \begin{equation*}
    \coprod_{\{j\mid n_j=n\}}|T_j|
  \end{equation*}
  is the union of all the $S_i$'s for which $|S_i|=n$.
\end{proof}
Taking $S$ to be the set $M$, the $S_i$'s to be the $G_I$-orbits in $M$, and $T_j$'s to be the sets $X_{I,J,K}$ in Lemma~\ref{lemma:counting-parts} gives:
\begin{theorem}
  \label{theorem:G_I-orbits-cardinalities}
  Let $\alpha(q)$ be a monic polynomial in $q$ with integer coefficients.
  Then the number of $G_I$-orbits in $M$ with cardinality $\alpha(q)$ is
  \begin{equation*}
    N_\alpha(q) = \frac 1{\alpha(q)} \sum_{\{(I,J,K)\mid \alpha_{I,J,K}(q)=\alpha(q)\}} |X_{I,J,K}|.
  \end{equation*}
\end{theorem}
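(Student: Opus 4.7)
The plan is to invoke Lemma~\ref{lemma:counting-parts} directly, taking $S = M$, letting the $S_i$ be the $G_I$-orbits in $M$, and letting the $T_j$'s be the sets $X_{I,J,K}$ as $(J,K)$ ranges over $J(P)_{\lambda'/I}\times J(P)_{\lambda''}$ (with $I$ fixed throughout). Because the invariants $I(\bar m')$ and $I(m'')$ are well-defined functions of $m=(m',m'')\in M$, the sets $X_{I,J,K}$ partition $M$, so the hypothesis of the lemma on the $T_j$'s is satisfied. The index $n_j$ attached to $T_j=X_{I,J,K}$ will be the polynomial $\alpha_{I,J,K}(q)$ furnished by Theorem~\ref{theorem:sub-orbit-order}.

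The key verification is that if $x\in T_j\cap S_i$, then $|S_i|=n_j$. Concretely, suppose $m=(m',m'')\in X_{I,J,K}$ lies in the $G_I$-orbit $S_i$. By definition $I(\bar m')=J$ and $I(m'')=K$, so Theorem~\ref{theorem:sub-orbit-order} gives $|S_i|=\alpha_{I,J,K}(q)$. This cardinality depends only on $(J,K)$, not on the particular representative $m\in X_{I,J,K}$, so the compatibility hypothesis of Lemma~\ref{lemma:counting-parts} holds with $n_j=\alpha_{I,J,K}(q)$.

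Given a fixed monic polynomial $\alpha(q)\in\Z[q]$, Lemma~\ref{lemma:counting-parts} then yields
\begin{equation*}
  N_\alpha(q) = \frac{1}{\alpha(q)}\sum_{\{(J,K)\mid \alpha_{I,J,K}(q)=\alpha(q)\}}|X_{I,J,K}|,
\end{equation*}
which is exactly the asserted formula (with $I$ held fixed on both sides). Note that we are using Theorem~\ref{theorem:cardinality-of-orbit} implicitly to guarantee that $\alpha_{I,J,K}(q)$ is actually a polynomial in $q$ that one can compare with $\alpha(q)$.

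\textbf{Main subtlety.} There is no deep obstacle, since the substantive content was already packaged into Theorems~\ref{theorem:sub-orbit} and~\ref{theorem:sub-orbit-order}. The one point worth flagging is that $X_{I,J,K}$ is generally not $G_I$-stable — as the paragraph preceding Lemma~\ref{lemma:counting-parts} observes, condition \ref{item:second-condition} of Theorem~\ref{theorem:sub-orbit} can move an element of $X_{I,J,K}$ into some $X_{I,J,K'}$ with $K'\neq K$. It is precisely to handle this non-stability that Lemma~\ref{lemma:counting-parts} is phrased so as to require only that orbits meeting a given $T_j$ share a common size, rather than requiring each $T_j$ to be a union of orbits. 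Once this is recognized, the proof reduces to quoting the lemma.
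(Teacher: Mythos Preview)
Your proposal is correct and follows exactly the paper's own approach: the paper's proof is the single sentence ``Taking $S$ to be the set $M$, the $S_i$'s to be the $G_I$-orbits in $M$, and $T_j$'s to be the sets $X_{I,J,K}$ in Lemma~\ref{lemma:counting-parts} gives [the theorem].'' Your additional explicit verification that the $X_{I,J,K}$ partition $M$ and that Theorem~\ref{theorem:sub-orbit-order} supplies the required constant $n_j$ simply spells out what the paper leaves implicit.
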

Since $\alpha(q)$ and $|X_{I,J,K}|$ are polynomials in $q$, the number $N_{\alpha}(q)$ of $G_I$-orbits in $M$ of cardinality $\alpha(q)$ is a rational function in $q$.
The following lemma will show that it is in fact a polynomial in $q$ with integer coefficients:
\begin{lemma}
  \label{lemma:polynomiality}
  Let $r(q)$ and $s(q)$ be polynomials in $q$ with integer coefficients.
  Suppose that $r(q)/s(q)$ takes integer values for infinitely many values of $q$.
  Then $r(q)/s(q)$ is a polynomial in $q$ with rational coefficients.
  If, in addition $s(q)$ is monic, then $r(q)$ has integer coefficients.
\end{lemma}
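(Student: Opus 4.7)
The plan is to first show that $r/s$ is a polynomial over $\Q$ by combining polynomial division with an asymptotic argument, and then to upgrade to integer coefficients using uniqueness of the division algorithm for monic divisors.

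First I would perform polynomial division in $\Q[q]$ to write
\[
  r(q) = s(q)\, p(q) + t(q),
\]
with $p, t \in \Q[q]$ and $\deg t < \deg s$; choose a positive integer $N$ clearing the denominators of $p$, so that $Np\in \Z[q]$.

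The heart of the argument is to show $t \equiv 0$. At any integer $q_0$ for which $s(q_0)\neq 0$ and $r(q_0)/s(q_0)\in \Z$, the rational number
\[
  N\,\frac{t(q_0)}{s(q_0)} \;=\; N\,\frac{r(q_0)}{s(q_0)} \;-\; N\,p(q_0)
\]
is an integer. Since $\deg t < \deg s$, we have $t(q)/s(q)\to 0$ as $|q|\to\infty$, so for all sufficiently large $q_0$ in our infinite set the quantity $|N\,t(q_0)/s(q_0)| < 1$; being an integer, it must vanish, giving $t(q_0)=0$. Hence $t$ has infinitely many roots and $t\equiv 0$, so $r/s = p\in \Q[q]$, proving the first claim.

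For the monic refinement, I would invoke the fact that division by a monic polynomial is valid over any commutative ring and produces a unique quotient and remainder. Applied in $\Z[q]$ this yields $r = s\tilde p + \tilde t$ with $\tilde p, \tilde t\in \Z[q]$ and $\deg \tilde t < \deg s$. Viewed in $\Q[q]$, this identity together with uniqueness of division (which already produced $p$ and $t$) forces $\tilde p = p$ and $\tilde t = t = 0$. Thus $r/s = p \in \Z[q]$.

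The only delicate step is showing $t\equiv 0$; this hinges on the interaction between the asymptotic bound $|t(q)/s(q)| \to 0$ and the integrality of $N\bigl(r(q_0)/s(q_0) - p(q_0)\bigr)$ at the infinitely many $q_0$ supplied by hypothesis. Everything else is formal manipulation of the division algorithm.
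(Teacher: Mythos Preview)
Your argument is correct. The paper itself omits the proof entirely (``The proof, being fairly straightforward, is omitted''), so there is no approach to compare against; your division-plus-asymptotics argument is exactly the standard way to fill this in. One small remark: your proof uses that the infinitely many admissible $q_0$ are integers (so that $Np(q_0)\in\Z$) and form an unbounded set (so that $|t(q_0)/s(q_0)|$ can be made small). In the paper's application the $q$'s are prime powers, so both conditions are satisfied, but it is worth making this implicit assumption explicit since the lemma as stated does not literally say ``integer values of $q$''. Also note that the lemma's final clause ``then $r(q)$ has integer coefficients'' is evidently a typo for ``then $r(q)/s(q)$ has integer coefficients'', which is what you (correctly) prove.
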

The proof, being fairly straightforward, is omitted.
\begin{example}
  \label{example:max-ideal}
  Consider an arbitrary $\lambda\in \Lambda$, and take $I$ to be the maximal ideal in $J(P)_\lambda$ (this is the ideal in $P$ generated by $P_\lambda$).
  Then, in the notation of (\ref{eq:lambda}),
  \begin{equation*}
    \lambda' = (\lambda_1), \quad \lambda'' = (\lambda_1^{m_1-1},\lambda_2^{m_2},\dotsc,\lambda_l^{m_l}).
  \end{equation*}
  The element $m(I)$ is a generator of $M'$, and so $M'/Rm(I)=0$.
  It follows that the only possibility for the ideal $J\in J(P)_{\lambda'/I}$ is $J=\emptyset$.
    As a result, the only combinatorial invariant of a $G_I$-orbits in $M$ is $K\in J(P)_{\lambda''}$.
    We have 
    \begin{equation*}
      \alpha_{I,\emptyset,K}(q) = |M'_K||{M''}^*_K|.
    \end{equation*}
    On the other hand,
    \begin{equation*}
      |X_{I,\emptyset,K}| = q^{\lambda_1}|{M''}^*_K|.
    \end{equation*}
    Therefore, given a polynomial $\alpha(q)$, the number of $G_I$-orbits of cardinality $\alpha(q)$ is
    \begin{equation*}
      \sum_{\{K\in J(P)_{\lambda''}\mid \alpha_{I,\emptyset,K}=\alpha(q)\}} \frac{q^{\lambda_1}}{|M'_K|}.
    \end{equation*}
    Since $K=\emptyset$ is the only ideal in $J(P)_{\lambda''}$ for which $|M'_K|=1$, it turns out that the total number of $G_I$-orbits in $M_I\times M$ is a monic polynomial in $q$ of degree $\lambda_1$.

    For example, if $\lambda = (2,1^{m_2})$, then the number of $G_I$-orbits in $M$ is $q^2+q$, and if $\lambda=(2^{m_1},1^{m_2})$ with $m_1>1$, then the number of $G_I$-orbits in $\lambda$ is $q^2+2q+1$.
\end{example}
\begin{example}
  \begin{table}
    \label{tab:cards-numbers}
    \begin{equation*}
      \begin{array}{|c|c|}
        \hline
        \text{Cardinality} & \text{Number of Orbits}\\
        \hline
        1 & q^{3} \\
        (q - 1)  q^{7} & (q - 1)  q \\
        (q - 1)  q^{12} & (q - 1) \\
        q^{4} & (q - 1)  q^{2} \\
        (q - 1)^{2}  q^{11} & 1 \\
        (q - 1)^{2}  q^{8} & q \\
        (q - 1)^{2}  q^{10} & 1 \\
        (q - 1)  q^{2} & q^{2} \\
        (q - 1)^{2}  q^{6} & q \\
        (q - 1)^{2}  q^{3} & q^{2} \\
        (q - 1)^{2}  q^{5} & q \\
        (q - 1) & q^{3} \\
        (q - 1)  q^{15} & 1 \\
        (q - 1)  q^{5} & q \\
        q^{9} & (q - 1)  q \\
        (q - 1)  q^{8} & q \\
        (q - 1)  q^{14} & 1 \\
        (q - 1)  q^{11} & (q - 1) \\
        (q - 1)  q^{6} & q^{2} \\
        (q - 1)  q^{4} & (q - 1)  q^{2} \\
        (q - 1)  q^{3} & 2q^{2} \\
        (q - 1)  q^{9} & q^{2} \\
        (q - 1)  q^{10} & q \\
        \hline
      \end{array}
    \end{equation*}
    \caption{Cardinalities and numbers of $G_I$-orbits}
  \end{table}
  Now consider the case where $\lambda=(5,4,4,2,1)$ and $I$ is the ideal of Figure~\ref{fig:ideal-example}.
  Then the first column of Table~\ref{tab:cards-numbers} gives all the possible cardinalities for $G_I$-orbits in $M$.
  The corresponding entry of the second column is the number of orbits with that cardinality.
  The total number of $G_I$-orbits in $M$ is given by the polynomial
  \begin{equation*}
    4q^3 + 6q^2 + 6q + 2.
  \end{equation*}
  This data was generated using a computer program written in sage. In general the total number of $G_I$ orbits in $M$ need not be a polynomial with positive integer coefficients, for example, take $\lambda = (2)$ (so $M=R/P^2R$) and $I$ is the ideal generated by $(1,2)$ (the corresponding orbit in $M$ contains $\pi$).
\end{example}
The above results can be summarized to give the following Theorem:
\begin{theorem}
  \label{theorem:main1}
  Let $R$ be a discrete valuation ring with finite residue field.
  Fix $\lambda \in \Lambda_0$ and take $M$ as in (\ref{eq:module-form}).
  Let $G$ denote the group of $R$-module automorphisms of $M$.
  Fix an order ideal $I\in J(P)_\lambda$ (and hence the $G$-orbit $M^*_I$ in $M$).
  \begin{enumerate}
  \item The cardinality of each $G$-orbit in $M^*_I\times M$ is a monic polynomial in $q$ whose coefficients are integers.
  \item Given a monic polynomial $\beta(q)$ with integer coefficients, the number of $G$-orbits in $M^*_I\times M$ of cardinality $\beta(q)$ is a polynomial in $q$ with coefficients that are integers which do not depend on $R$.
  \end{enumerate}
\end{theorem}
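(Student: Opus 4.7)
The plan is to reduce everything to the machinery already developed for $G_I$-orbits in $M$. By Theorem~\ref{theorem:orbit-par}, every $G$-orbit in $M^*_I\times M$ contains a representative of the form $(m(I),m)$, and two such pairs $(m(I),m_1)$ and $(m(I),m_2)$ lie in the same $G$-orbit if and only if $m_1$ and $m_2$ lie in the same $G_I$-orbit. So the $G$-orbits in $M^*_I\times M$ are in bijection with the $G_I$-orbits in $M$, and the orbit-stabilizer theorem gives
\begin{equation*}
  |G\cdot(m(I),m)| = [G:G_I]\cdot |G_I\cdot m| = |M^*_I|\cdot |G_I\cdot m|.
\end{equation*}

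For part (1), I would simply combine this cardinality identity with Theorem~\ref{theorem:cardinality-of-singleton-orbit} (giving that $|M^*_I|$ is a monic polynomial in $q$ with integer coefficients independent of $R$) and Theorem~\ref{theorem:cardinality-of-orbit} (giving that $|G_I\cdot m|=\alpha_{I,J,K}(q)$ is a monic polynomial in $q$ with integer coefficients independent of $R$). The product of two such polynomials is again monic with integer coefficients independent of $R$.

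For part (2), fix a monic polynomial $\beta(q)\in \Z[q]$. A $G$-orbit in $M^*_I\times M$ has cardinality $\beta(q)$ exactly when the corresponding $G_I$-orbit has cardinality $\beta(q)/|M^*_I|$. If $|M^*_I|$ does not divide $\beta$ in $\Z[q]$, there is nothing to count for sufficiently large $q$ and the answer is the zero polynomial. Otherwise, setting $\alpha(q)=\beta(q)/|M^*_I|$, I would invoke Theorem~\ref{theorem:G_I-orbits-cardinalities} to express the count as the rational function
\begin{equation*}
  N_\alpha(q) = \frac{1}{\alpha(q)}\sum_{\{(J,K)\,\mid\,\alpha_{I,J,K}=\alpha\}}|X_{I,J,K}|(q),
\end{equation*}
where the key point is that, since distinct polynomials agree at only finitely many values of $q$, for all but finitely many residue field sizes the sum over $(J,K)$ with $\alpha_{I,J,K}(q)=\alpha(q)$ coincides with the sum over $(J,K)$ with $\alpha_{I,J,K}=\alpha$ as polynomials. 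Finally, applying Lemma~\ref{lemma:polynomiality} (with $\alpha(q)$ monic) to this rational function shows that $N_\alpha(q)$ is itself a polynomial in $q$ with integer coefficients.

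The main obstacle I anticipate is verifying that $|X_{I,J,K}|$ is genuinely a polynomial in $q$ with coefficients independent of $R$, so that the rational function above has an $R$-independent numerator. This should follow from a direct count using Theorem~\ref{theorem:description-of-orbits}: the elements of $X_{I,J,K}$ are described by explicit valuation conditions on the coordinates of $m'$ modulo $Rm(I)$ and on the coordinates of $m''$, and each such condition contributes a factor that is polynomial in $q$ with $R$-independent integer coefficients (mirroring the reasoning in Theorem~\ref{theorem:cardinality-of-singleton-orbit}). Once that polynomiality is in hand, the two steps above close the argument.
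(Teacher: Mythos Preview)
Your approach is essentially the paper's own: the paper presents Theorem~\ref{theorem:main1} as a direct summary of Theorems~\ref{theorem:cardinality-of-orbit} and~\ref{theorem:G_I-orbits-cardinalities} together with Lemma~\ref{lemma:polynomiality}, and you have simply made the translation from $G_I$-orbits in $M$ to $G$-orbits in $M^*_I\times M$ explicit via the orbit--stabilizer identity $|G\cdot(m(I),m)|=|M^*_I|\cdot|G_I\cdot m|$, which is exactly the missing link the paper leaves implicit.

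Your anticipated obstacle about $|X_{I,J,K}|$ is real but easily resolved: since $X_{I,J,K}$ is the product of $\{m'\in M':I(\bar m')=J\}$ with ${M''}^*_K$, and the first set is the preimage of the orbit $(M'/Rm(I))^*_J$ under the quotient map $M'\to M'/Rm(I)$, one gets $|X_{I,J,K}|=|Rm(I)|\cdot|(M'/Rm(I))^*_J|\cdot|{M''}^*_K|$, each factor being a polynomial in $q$ with $R$-independent integer coefficients by (\ref{eq:order-of-orbit}) and the observation $|Rm(I)|=q^{k_1-v_1}$ (cf.\ the proof of Lemma~\ref{lemma:quotient}). The paper itself only makes this fully explicit in Section~\ref{sec:module-mrmi}, so you are being slightly more careful than the text at this point.
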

For the total number of orbits in $M\times M$, we have:
\begin{theorem}
  \label{theorem:degree}
  Let $R$ be a discrete valuation ring with finite residue field of order $q$.
  Fix $\lambda \in \Lambda$ and take $M$ as in (\ref{eq:module-form}).
  Let $G$ denote the group of $R$-module automorphisms of $M$.
  Then there exists a monic polynomial $n_\lambda(q)$ of degree $\lambda_1$ with integer coefficients (which do not depend on $R$ or $q$) such that the number of $G$-orbits in $M\times M$ is $n_\lambda(q)$.
\end{theorem}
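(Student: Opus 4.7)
The plan is to reduce to the finite case, express $n_\lambda(q)$ as a sum over order ideals of polynomials established in the previous section, and then single out the unique contribution producing the leading $q^{\lambda_1}$ term.

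First, I would invoke Corollary~\ref{cor:independence of multiplicities} to replace each multiplicity $m_i$ by $\min(m_i,2)$, reducing to $\lambda\in\Lambda_0$ with $M$ finite. Decomposing $M\times M=\coprod_{I\in J(P)_\lambda}M^*_I\times M$ and using the bijection between $G$-orbits in $M^*_I\times M$ and $G_I$-orbits in $M$ (from the setup of Theorem~\ref{theorem:sub-orbit}), I obtain
\begin{equation*}
  n_\lambda(q)=\sum_{I\in J(P)_\lambda}|G_I\bsl M|.
\end{equation*}
Each summand is a polynomial in $q$ with integer coefficients independent of $R$: Theorem~\ref{theorem:G_I-orbits-cardinalities} together with Lemma~\ref{lemma:polynomiality} expresses the number of $G_I$-orbits of any cardinality $\alpha(q)$ as an integer-coefficient polynomial $N_\alpha(q)$, and only finitely many $\alpha$ arise from the finitely many triples $(I,J,K)$. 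Summing over the finite lattice $J(P)_\lambda$ shows $n_\lambda(q)$ itself is such a polynomial.

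Next I would single out the contribution from the maximal ideal $I_{\max}\in J(P)_\lambda$ (the ideal generated by $P_\lambda$, whose unique maximal element is $(0,\lambda_1)$). Example~\ref{example:max-ideal} applies directly: $M'=R/P^{\lambda_1}$, the element $m(I_{\max})$ generates $M'$, and $|G_{I_{\max}}\bsl M|$ is a monic polynomial of degree $\lambda_1$ whose leading term $q^{\lambda_1}$ comes entirely from the $q^{\lambda_1}$ singleton orbits with $(J,K)=(\emptyset,\emptyset)$; each nontrivial choice of $K\in J(P)_{\lambda''}$ contributes a term $q^{\lambda_1}/|M'_K|$ of strictly smaller degree.

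The main obstacle, and the technical heart of the proof, is to show that every other $I\in J(P)_\lambda$ contributes a polynomial of degree at most $\lambda_1-1$. To this end, I would introduce the invariant
\begin{equation*}
  d(I)=\min\{k_j-v_j\mid (v_j,k_j)\in\max I\},
\end{equation*}
the $\pi$-adic order of the cyclic submodule $Rm(I)$, so that $|Rm(I)|=q^{d(I)}$. The case $I=\emptyset$ is handled separately: $G_\emptyset=G$ and $|G_\emptyset\bsl M|=|J(P)_\lambda|$ is a constant. For nonempty $I\neq I_{\max}$, because $(0,\lambda_1)$ is the only element of $P_\lambda$ at which $k-v$ attains $\lambda_1$, one has $d(I)<\lambda_1$. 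I would then inspect Theorem~\ref{theorem:sub-orbit-order}: the factorization $|X_{I,J,K}|=q^{d(I)}|(M'/Rm(I))^*_J||M''^*_K|$ carries the uniform factor $q^{d(I)}$, and the remaining factors are dominated by $|M'_{J\cup K}|$ and $\sum_{K'}|M''^*_{K'}|$ appearing in $\alpha_{I,J,K}$; after binning by cardinality as in Lemma~\ref{lemma:counting-parts}, each resulting $N_\alpha(q)$ has degree bounded by $d(I)$. Summing over $I\neq I_{\max}$ gives a polynomial of degree at most $\lambda_1-1$, and adding the $I_{\max}$ contribution yields the monic polynomial $n_\lambda(q)$ of degree $\lambda_1$ required.
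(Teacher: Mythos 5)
Your overall route is the same as the paper's: reduce to $\Lambda_0$ via Corollary~\ref{cor:independence of multiplicities}, write $n_\lambda(q)=\sum_{I}|G_I\bsl M|$, get polynomiality from Theorem~\ref{theorem:G_I-orbits-cardinalities} and Lemma~\ref{lemma:polynomiality}, extract the monic $q^{\lambda_1}$ term from the maximal ideal via Example~\ref{example:max-ideal}, and bound all other contributions by $\log_q|Rm(I)|<\lambda_1$. However, there are two problems in the step that is supposed to do the real work. First, your invariant is wrong as stated: $Rm(I)\cong R/P^{\max_j(k_j-v_j)}=R/P^{k_1-v_1}$ (the annihilator of $m(I)$ is the intersection $\bigcap_j P^{k_j-v_j}$), so $|Rm(I)|=q^{\max_j(k_j-v_j)}$, not $q^{\min_j(k_j-v_j)}$; with your $d(I)$ the factorization $|X_{I,J,K}|=q^{d(I)}\,|(M'/Rm(I))^*_J|\,|{M''}^*_K|$ is false. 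This is fixable (with the max, your observation that $(0,\lambda_1)\in\max I$ forces $I=I_{\max}$, hence $d(I)<\lambda_1$ otherwise, goes through), but as written the uniform factor you peel off is not the right one.

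Second, and more seriously, the assertion that ``the remaining factors are dominated by $|M'_{J\cup K}|$ and $\sum_{K'}|{M''}^*_{K'}|$'' is precisely the nontrivial point, and you give no argument for it. What is needed is $\deg|(M'/Rm(I))^*_J|\le\deg|M'_{J\cup K}|$, i.e.\ $[J]_{\lambda'/I}\le[J\cup K]_{\lambda'}$: the two counts are taken with respect to \emph{different} partitions ($\lambda'/I$ for the quotient module, $\lambda'$ for $M'$), and there is no a priori inclusion or surjection relating $(M'/Rm(I))^*_J$ to $M'_{J\cup K}$. The paper handles this by first computing $\lambda'/I=(v_1+k_2-v_2,\dotsc,v_{s-1}+k_s-v_s,v_s)$ (Lemma~\ref{lemma:quotient}), noting that its parts interlace those of $\lambda'$, and then proving $[J]_{\lambda'/I}=[J]_{\lambda'}$ (Lemma~\ref{lemma:lovely}); a weaker inequality would also do, but it still requires the interlacing $k_i\ge v_i+k_{i+1}-v_{i+1}$ together with the monotonicity of $k\mapsto k-\partial_k J$, none of which appears in your proposal. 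Until you supply this comparison, the bound $\deg\bigl(|X_{I,J,K}|/\alpha_{I,J,K}\bigr)\le\log_q|Rm(I)|$, and hence the claim that every $I\ne I_{\max}$ contributes degree at most $\lambda_1-1$, is unproved, and this is the technical heart of the theorem.
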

\begin{proof}
  The only thing that remains to be proved is the assertion about the degree of $n_\lambda(q)$.
  By Theorem~\ref{theorem:G_I-orbits-cardinalities}, 
  \begin{equation*}
    \deg n_\lambda (q) = \max_{I,J,K} (\deg|X_{I,J,K}|-\deg\alpha_{I,J,K}(q)).
  \end{equation*}
  Recalling the definitions of $X_{I,J,K}$ and $\alpha_{I,J,K}(q)$, we find that we need to show that
  \begin{equation*}
    [J\cup K]_{\lambda'/I} + \log_q|Rm(I)| + [K]_{\lambda''} \leq \lambda_1 + [J\cup K]_\lambda.
  \end{equation*}
  Observe that $[J\cup K]_\lambda = [J\cup K]_{\lambda'}+[J\cup K]_{\lambda''}$, and $[K]_{\lambda''}\leq [J\cup K]_{\lambda''} $.
  Moreover, it turns out that $[J\cup K]_{\lambda'/I} = [J\cup K]_{\lambda'}$ (see Lemma~\ref{lemma:lovely} below).
  Therefore, the inequality to be proved reduces to $\log_q|Rm(I)|\leq \lambda_1$, which is obviously true.
  Furthermore, if equality holds, then $|Rm(I)|=\lambda_1$, which is only possible if $I$ is the maximal ideal in $J(P)_\lambda$, which was considered in Example~\ref{example:max-ideal}, where a monic polynomial of degree $0$ was obtained.
\end{proof}
\begin{lemma}
  \label{lemma:lovely}
  For any ideal $J\in J(P)_{\lambda'/I}$, 
  \begin{equation*}
    [J]_{\lambda'/I} = [J]_{\lambda'}.
  \end{equation*}
\end{lemma}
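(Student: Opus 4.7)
The plan is to appeal to Lemma~\ref{lemma:quotient} (referenced in the excerpt but stated later), which should give an explicit combinatorial identification of the partition $\lambda'/I$ with the multiset of ``upper slices'' of $P_{\lambda'}$ cut out by $\max I$. Concretely, writing $\max I=\{(v_1,k_1),\dotsc,(v_s,k_s)\}$, the $j$-th part of $\lambda'/I$ should correspond to the set $\{(v,k_j)\in P : v_j\leq v<k_j\}$, i.e.\ the $k_j$-column of $P_{\lambda'}$ truncated below $v_j$. Under this identification the two quantities in the lemma read
\[
[J]_{\lambda'/I}=\sum_{j=1}^{s}\bigl|\{v:(v,k_j)\in J,\ v\geq v_j\}\bigr|,\qquad [J]_{\lambda'}=\sum_{j=1}^{s}\bigl|\{v:(v,k_j)\in J\}\bigr|,
\]
so the equality reduces cleanly to the automatic-bound claim: for every $J\in J(P)_{\lambda'/I}$ and every $(v,k_j)\in J$ with $k_j$ a part of $\lambda'$, one has $v\geq v_j$.

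To verify the automatic bound, fix such a cell and choose a generator $(v_m,k_{j_m})\in\max J$ with $(v,k_j)\leq(v_m,k_{j_m})$. Since $J\in J(P)_{\lambda'/I}$, the generator lies in the upper slice of its own column, so $v_m\geq v_{j_m}$. Unpacking the definition of $\leq$ on $P$ gives the two inequalities $v\geq v_m$ and $v\geq k_j-k_{j_m}+v_m$. If $j_m=j$, the first combined with $v_m\geq v_{j_m}=v_j$ already yields $v\geq v_j$.

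For the cross-level case $j_m\neq j$, invoke the antichain property of $\max I$. Negating both $(v_j,k_j)\leq(v_{j_m},k_{j_m})$ and its reverse, and ruling out the degenerate possibility $v_j<v_{j_m}$ with $k_j-v_j>k_{j_m}-v_{j_m}$ simultaneously (which would force $k_j<k_{j_m}$ and $k_j>k_{j_m}$ at once, a contradiction), pins down two symmetric sub-cases. In the sub-case $k_j>k_{j_m}$ together with $v_j>v_{j_m}$ and $k_j-v_j>k_{j_m}-v_{j_m}$, rewrite the last inequality as $k_j-k_{j_m}+v_{j_m}>v_j$ and combine with $v\geq k_j-k_{j_m}+v_m\geq k_j-k_{j_m}+v_{j_m}$ to conclude $v>v_j$. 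In the mirror sub-case $k_j<k_{j_m}$ with $v_j<v_{j_m}$, the direct bound $v\geq v_m\geq v_{j_m}>v_j$ suffices. Either way $v\geq v_j$, and summing the per-$j$ cell counts finishes the proof.

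The main obstacle is the cross-level sub-case $j_m\neq j$ with $k_j>k_{j_m}$: here neither of the single inequalities $v\geq v_m$ or $v_m\geq v_{j_m}$ is by itself strong enough to yield $v\geq v_j$, and the argument genuinely requires combining the order-type inequality $v\geq k_j-k_{j_m}+v_m$ with the antichain inequality $k_j-v_j>k_{j_m}-v_{j_m}$ in tandem. Keeping the two sub-cases of the antichain dichotomy aligned, and invoking the impossibility of the degenerate third possibility, is the delicate bookkeeping step on which the proof turns.
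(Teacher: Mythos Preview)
Your proof rests on a misidentification of the poset $P_{\lambda'/I}$. You assert that the $j$-th part of $\lambda'/I$ ``corresponds to'' the upper slice $\{(v,k_j):v_j\le v<k_j\}$ of the $k_j$-column, and consequently write
\[
[J]_{\lambda'/I}=\sum_{j=1}^{s}\bigl|\{v:(v,k_j)\in J,\ v\ge v_j\}\bigr|.
\]
But $P_{\lambda'/I}$ is by definition the union of the \emph{full} columns of $P$ indexed by the parts of the partition $\lambda'/I$, and Lemma~\ref{lemma:quotient} computes those parts as
\[
\mu_j=v_j+k_{j+1}-v_{j+1}\quad(1\le j<s),\qquad \mu_s=v_s.
\]
These are not equal to $k_j-v_j$ in general; indeed the antichain inequalities give the strict interleaving $k_1>\mu_1>k_2>\mu_2>\dotsb>k_s>\mu_s$, so the columns of $P_{\lambda'/I}$ are \emph{disjoint} from those of $P_{\lambda'}$. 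The correct expression is therefore
\[
[J]_{\lambda'/I}=\sum_{j=1}^{s}\bigl|\{v:(v,\mu_j)\in J\}\bigr|,
\]
and your displayed identity is simply not the quantity in the lemma. A concrete check: take $\lambda'=(4,1)$ with $\max I=\{(1,4),(0,1)\}$, so $\lambda'/I=(2)$; for $J$ the ideal generated by $(0,2)$ your formula gives $3$, whereas $[J]_{\lambda'/I}=|\{v:(v,2)\in J\}|=2$.

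The same misreading infects the rest of the argument: you take a maximal element of $J$ to be of the form $(v_m,k_{j_m})$ with $k_{j_m}$ a part of $\lambda'$, but membership in $J(P)_{\lambda'/I}$ forces the second coordinate of any maximal element to be one of the $\mu_j$, not one of the $k_j$. Your ``automatic bound'' $v\ge v_j$ for $(v,k_j)\in J$ is also false as stated (e.g.\ $\lambda'=(10,1)$, $\max I=\{(8,10),(0,1)\}$, $\lambda'/I=(9)$, $J$ generated by $(0,9)$: then $(1,10)\in J$ but $1<v_1=8$). The paper's proof instead exploits the interleaving $k_i>\mu_i>k_{i+1}$ directly, comparing the height of $J$ at column $\mu_i$ with its height at column $k_i$; you should rebuild the argument from the actual description of $\lambda'/I$ in Lemma~\ref{lemma:quotient}.
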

\begin{proof}
  The partition $\lambda'/I$ is described in Lemma~\ref{lemma:quotient}.
  Observe that
  \begin{equation*}
    k_1\geq v_1+k_2-v_2 \geq k_2 \geq v_2+k_3-v_2 \geq \dotsb \geq v_{s-1}+k_s-v_s \geq v_s,
  \end{equation*}
  In other words, the parts of $\lambda'/I$ alternate with the parts of $\lambda'$.
  For each ideal $J\in J(P)_{\lambda'/I}$, the contribution of $J$ to $[J]_{\lambda'/I}$ in a given chain $(*,v_i+k_{i+1}-v_{i+1})\subset P_{\lambda'/I}$  (or $(*,v_s)\subset P_{\lambda'/I}$) is equal to its contribution to $[J]_{\lambda'}$ in the chain $(*,k_i)\subset P_{\lambda'}$ (resp. $(*,k_s)\subset P_{\lambda'}$).
It follows that $[J]_{\lambda'} = [J]_{\lambda'/I}$.
\end{proof}
\section{Orbits in $M^*_I\times M^*_L$}
\label{sec:module-mrmi}
In order to refine Theorem~\ref{theorem:main1} to the enumeration of $G$-orbits in $M^*_I\times M^*_L$ for a pair of order ideals $(I,L)\in J(P)_\lambda^2$, we need to repeat the calculations in Section~\ref{sec:stabilizer-orbit-an} with $X_{I,J,K}$ replaced by its subset
\begin{equation*}
  X_{I,J,K,L} = \{m\in X_{I,J,K}\mid m\in M^*_L\}.
\end{equation*}
Thus our goal is to show that $|X_{I,J,K,L}|$ is a polynomial in $q$ whose coefficients are integers which do not depend on $R$.
By using M\"obius inversion on the lattice $J(P)_{\lambda}$, it suffices to show that 
\begin{equation*}
  Y_{I,J,K,L} = \{m\in X_{I,J,K}\mid m\in M_L\}
\end{equation*}
has cardinality polynomial in $q$ whose coefficients are integers which do not depend on $R$.
This is easier, because $m=(m',m'')\in M_L$ if and only if $m'\in M_L'$ and $m''\in M_L''$.
If $(m',m'')\in _{I,J,K,L}$, we also have that $m''\in {M''_K}^*$.
Thus $Y_{I,J,K,L}=\emptyset$ unless $K\subset L$, in which case
\begin{equation*}
  |Y_{I,J,K,L}| = \#\{m'\in M'_L\mid I(\bar m')=J\}|{M''_K}^*|.
\end{equation*}
Therefore, we are reduced to proving the following lemma:
\begin{lemma}
  \label{lemma:image-orbit}
  The cardinality of the set
  \begin{equation*}
    \{m'\in M'\mid m'\in M'_L \text{ and } I(\bar m')=J\}
  \end{equation*}
  is a polynomial in $q$ whose coefficients are integers which do not depend on $R$.
\end{lemma}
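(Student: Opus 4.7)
The strategy is M\"obius inversion on the finite distributive lattice $J(P)_{\lambda'/I}$, reducing the question to a simpler ``less than or equal to'' count.

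Let $f\colon M' \to M'/Rm(I)$ be the quotient map, and for each $J' \in J(P)_{\lambda'/I}$ define
\begin{equation*}
  A_{J'} \;=\; \{m' \in M'_L \mid I(\bar m') \subset J'\} \;=\; M'_L \cap f^{-1}\bigl((M'/Rm(I))_{J'}\bigr),
\end{equation*}
the second equality coming from (\ref{eq:M_I}) applied to $M'/Rm(I)$. As an intersection of $R$-submodules, $A_{J'}$ is itself a (finite) $R$-submodule of $M'$. Since $I(\bar m')=J$ if and only if $m' \in A_J$ but $m' \notin A_{J''}$ for every $J''\subsetneq J$ in $J(P)_{\lambda'/I}$, M\"obius inversion yields
\begin{equation*}
  \#\{m' \in M'_L \mid I(\bar m')=J\} \;=\; \sum_{J'\subset J} \mu(J', J)\,|A_{J'}|,
\end{equation*}
where the sum is over $J' \in J(P)_{\lambda'/I}$ contained in $J$, and $\mu$ is the M\"obius function of this lattice. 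Since $\mu$ is integer-valued and depends only on $\lambda'/I$ (see Lemma~\ref{lemma:quotient}), which is itself independent of $R$, it suffices to show that each $|A_{J'}|$ is a polynomial in $q$ with integer coefficients not depending on $R$.

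Because $A_{J'}$ is a finite $R$-submodule of $M'$, $|A_{J'}| = q^{\ell_R(A_{J'})}$, and the task becomes showing that this length depends only on the combinatorial data $(I, J', L, \lambda')$. I would compute $|A_{J'}|$ using the identity
\begin{equation*}
  |A_{J'}| \;=\; \frac{|M'_L|\cdot|f^{-1}((M'/Rm(I))_{J'})|}{|M'_L + f^{-1}((M'/Rm(I))_{J'})|}.
\end{equation*}
Applying (\ref{eq:order-of-submodule}) in both $M'$ and $M'/Rm(I)$, the numerator equals $q^{[L]_{\lambda'}}\cdot|Rm(I)|\cdot q^{[J']_{\lambda'/I}}$, with $|Rm(I)| = q^{\max_j(k_j-v_j)}$ a monomial whose exponent depends only on $\max I$. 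Because $Rm(I) = \ker f$ is contained in $f^{-1}((M'/Rm(I))_{J'})$, the denominator factors as $|Rm(I)|\cdot|f(M'_L) + (M'/Rm(I))_{J'}|$.

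The principal obstacle is controlling $|f(M'_L) + (M'/Rm(I))_{J'}|$. Explicit computation in small examples suggests that $f(M'_L)$ is itself a characteristic submodule $(M'/Rm(I))_{L^*}$, with $L^* \in J(P)_{\lambda'/I}$ produced from $L$ and $I$ by an explicit combinatorial recipe on boundary valuations. Granting this, Theorem~\ref{theorem:characteristic-submodules} applied in $M'/Rm(I)$ gives $f(M'_L) + (M'/Rm(I))_{J'} = (M'/Rm(I))_{L^* \cup J'}$, of cardinality $q^{[L^* \cup J']_{\lambda'/I}}$ by (\ref{eq:order-of-submodule}). Substituting makes the exponent of $q$ in $|A_{J'}|$ a combinatorial expression in $I$, $J'$, $L$, and $\lambda'$, completing the proof. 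To establish the characterization of $f(M'_L)$, I would realize the isomorphism $M'/Rm(I) \cong M_{\lambda'/I}$ of Lemma~\ref{lemma:quotient} in an explicit basis, compute $f$ as an $R$-linear map, and track the valuation conditions defining $M'_L$ through the projection; this should read off the boundary values $\partial_\mu L^*$ as explicit functions of the $\partial_k L$ and $v_j$.
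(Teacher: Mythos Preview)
Your M\"obius-inversion reduction to the submodules $A_{J'}=M'_L\cap f^{-1}\bigl((M'/Rm(I))_{J'}\bigr)$ is exactly the first step the paper takes. The divergence, and the genuine gap, is in the computation of $|A_{J'}|$: your claim that $f(M'_L)$ is always a characteristic submodule of $M'/Rm(I)$ is false, so the pivotal identity $f(M'_L)+(M'/Rm(I))_{J'}=(M'/Rm(I))_{L^*\cup J'}$ cannot be invoked.

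Here is a counterexample. Take $\lambda'=(5,3,1)$ and $m(I)=(\pi^2,\pi,1)$, so that $\lambda'/I=(4,2)$ and, in the notation of Lemma~\ref{lemma:quotient},
\[
\bar Q(m_1,m_2,m_3)=\bigl(m_1-\pi m_2\bmod P^4,\; m_2-\pi m_3\bmod P^2\bigr).
\]
Let $L$ be the ideal generated by $(0,3)$, so $M'_L=\pi^2R/P^5\oplus R/P^3\oplus R/P$. Since $Rm(I)\subset M'_L$ one finds $|f(M'_L)|=|M'_L|/|Rm(I)|=q^7/q^3=q^4$. On the other hand, writing $(y_1,y_2)=\bar Q(m_1,m_2,m_3)$ one computes $y_1+\pi y_2\equiv \pi^2(a-c)\equiv 0\pmod{P^2}$; conversely this congruence together with $y_1\in\pi R/P^4$ characterises the image. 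Thus
\[
f(M'_L)=\bigl\{(y_1,y_2)\in \pi R/P^4\times R/P^2 : y_1/\pi + y_2\equiv 0\bmod P\bigr\},
\]
which is not a coordinate box: for instance $(\pi,0)$ lies in $\pi R/P^4\oplus R/P^2$ but not in $f(M'_L)$. One checks directly that none of the three submodules $\pi^{b_1}R/P^4\oplus\pi^{b_2}R/P^2$ of order $q^4$ coincides with $f(M'_L)$, so $f(M'_L)$ is not of the form $(M'/Rm(I))_{L^*}$.

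The paper avoids this obstacle altogether. After the same M\"obius reduction to the set $S=\{m'\in M'_L:\bar m'\in(M_{\lambda'/I})_{J}\}$, it uses the explicit matrix $Q$ of Lemma~\ref{lemma:quotient} to rewrite membership in $S$ as the system of valuation inequalities $v(m'_i)\geq\partial_{k_i}L$ together with $v(m'_i-\pi^{v_i-v_{i+1}}m'_{i+1})\geq\partial_{\mu_i}J$, and counts solutions coordinate by coordinate, choosing $m'_s,m'_{s-1},\dotsc,m'_1$ in turn. At each stage the number of choices with a prescribed valuation is visibly a polynomial in $q$ with integer coefficients independent of $R$. That direct recursive count is precisely what replaces the structural shortcut you were hoping for.
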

\begin{proof}
  Let $\bar M'$ denote the quotient $M'/Rm(I)$ (so $\bar M'$ is isomorphic to $M_{\lambda'/I}$ in the notation of Section~\ref{sec:stabilizer-orbit-an}).
  Suppose that $\max I = \{(v_1,k_1),\dotsc,(v_s,k_s)\}$.
  Then 
  \begin{equation*}
    M' = R/P^{k_1}\oplus \dotsb \oplus R/P^{k_s}.
  \end{equation*}
  \begin{lemma}
    \label{lemma:quotient}
    Let $\lambda'/I$ denote the partition given by
    \begin{equation*}
      \lambda'/I = (v_1+k_2-v_2, v_2+k_3-v_3,\dotsc, v_{s-1}+k_s-v_s,v_s).
    \end{equation*}
    and $M_{\lambda'/I}$ be the corresponding $R$-module as given by (\ref{eq:module-form}).
    If $Q$ is the matrix
    \begin{equation*}
      Q= 
      \begin{pmatrix}
        1 & -\pi^{v_1-v_2} & 0 & \cdots & 0 &0\\
        0 & 1 & -\pi^{v_2-v_3} & \cdots & 0 &0\\
        0 & 0 & 1 & \cdots & 0 & 0\\
        \vdots & \vdots & \vdots & \ddots & \vdots & \vdots\\
        0 & 0 & 0 & \cdots & 1 & -\pi^{v_{s-1}-v_s}\\
        0 & 0 & 0 & \cdots & 0 & 1
      \end{pmatrix}
    \end{equation*}
    then the isomorphism $R^s\to R^s$ whose matrix is $Q$ descends to a homomorphism $\bar Q:M'\to M_{\lambda'/I}$ such that $\ker \bar Q\supset Rm(I)$.
    The induced homomorphism $M'/Rm(I)\to M_{\lambda'/I}$ is an isomorphism of $R$-modules.
  \end{lemma}
  \begin{proof}
    Let $e_1,\dotsc,e_s$ denote the generators of $M'$, and $f_1,\dotsc,f_s$ denote the generators of $M_{\lambda'/I}$.
    Then
    \begin{equation*}
      Q\tilde e_j = 
      \begin{cases}
        \tilde f_1 & \text{ for } j=1,\\
        -\pi^{v_{j-1}-v_j}\tilde f_{j-1} + \tilde f_j \text{ for } 1<j\leq n.
      \end{cases}
    \end{equation*}
    Here $\tilde e_j$ (or $\tilde f_i$) denotes the standard lift of $e_i$ (or $f_j$) to $R^s$.
    By using the inequalities $k_j>v_j+k_{j+1}-v_{j+1}$ for $1\leq j<s$ and $k_s\geq v_s$, one easily verifies that $Q(\pi^{k_j}\tilde e_j)$ is $0$ in $M_{\lambda'/I}$.
    Therefore $Q$ induces a well-defined $R$-module homomorphism $\bar Q:M'\to M_{\lambda'/I}$.
    Now 
    \begin{eqnarray*}
      \bar Q(m(I)) & = & \bar Q(\sum \pi^{v_j}e_j)\\
      & = & \pi^{v_1}f_1 + (-\pi^{v_2+v_1-v_2}f_1 + \pi^{v_2}f_2) + (-\pi^{v_3+v_2-v_3}f_2 + \pi^{v_3}f_2) +\\
      & & \dotsb + (-\pi^{v_s+v_{s-1}-v_s}f_{s-1} + \pi^{v_s}f_s)\\
      & = & 0.
    \end{eqnarray*}
    Therefore $\bar Q$ induces a homomorphism $M'/Rm(I)\to M_{\lambda'/I}$.
    Because $Q\in SL_s(R)$, $\bar Q$ is onto.
    When the residue field of $R$ is finite, one easily verifies that $|Rm(I)||M_{\lambda'/I}| = |M'|$, whereby $\bar Q$ is an isomorphism.
    Indeed, $|Rm(I)| = q^{k_1-v_1}$, $|M'|=q^{|\lambda'|}$ and $|M_{\lambda'/I}| = q^{|\lambda'/I|} = q^{v_1+k_2+\dotsb+k_s}$.
    In general, this argument using cardinalities can be easily replaced by an argument using the lengths of modules of $R$.
  \end{proof}
  We now return to the proof of Lemma~\ref{lemma:image-orbit}.
  Using M\"obius inversion on the lattice $J(P_{\lambda'/I})$, in order to prove Lemma~\ref{lemma:image-orbit}, it suffices to show that the cardinality of the set
  \begin{equation*}
    S=\{m'\in M'\mid m'\in M'_L \text{ and } \bar m' \in (M_{\lambda/I})_J\}
  \end{equation*}
  is a polynomial in $q$ whose coefficients are integers which do not depend on $R$.
  Write $m'\in M'$ as $m'_1e_1+\dotsb+m'_se_s$, and $n\in M_{\lambda/I}$ as $n_1f_1+\dotsb+n_sf_s$.
  By (\ref{eq:M_I}) and Lemma~\ref{lemma:quotient} $S$ consists of elements $m'\in M'$ such that
  \begin{align*}
    v(m'_i) & \geq \partial_{k_i}L & \text{ for } i=1,\dots,s,\\
    v(\bar Q(m')_i) & \geq \partial_{v_i+k_{i+1}-v_{i+1}}J & \text{ for } i=1,\dotsc,s-1, \text{ and}\\
    v(\bar Q(m')_s)& \geq \partial_{v_s}J, &
  \end{align*}
  which can be rewritten as
  \begin{align*}
    v(m'_i) & \geq \partial_{k_i}L & \text{ for } i=1,\dots,s,\\
    v(m'_i-\pi^{v_i-v_{i+1}}m'_{i+1}) & \geq \partial_{v_i+k_{i+1}-v_{i+1}}J & \text{ for } i=1,\dotsc,s-1, \text{ and}\\
    v(m'_s)& \geq \partial_{v_s}J.
  \end{align*}
  Therefore we are free to choose for $m_s$ any element of $R/P^{k_s}R$ which satisfies
  \begin{equation*}
    v(m'_s)\geq \max(\partial_{k_s}L, \partial_{v_s}J).
  \end{equation*}
  Thus the number of possible choices of $m'_s$ of any given valuation is a polynomial in $q$ with coefficients that are integers which do not depend on $R$.
  Having fixed $m'_s$, we are free to choose $m'_{s-1}$ satisfying
  \begin{gather*}
    v(m'_{s-1})\geq \partial_{k_{s-1}}L\\
    v(m'_{s-1}+\pi^{v_{s-1}-v_s}m'_s) \geq \partial_{v_{s-1}+k_s-v_s}J.
  \end{gather*}
  Note that for any $x,y \in R/P^kR$ and non-negative integers $u,v$, the cardinality of the set
  \begin{equation*}
    \{x\mid v(x + y)\geq v \text{ and } v(x)=u\}
  \end{equation*}
  is a polynomial in $q$ with coefficients that are integers which do not depend on $R$.
  This shows that for each fixed valuation of $m'_s$, the number of possible choices for $m'_{s-1}$ of a fixed valuation is again a polynomial in $q$ whose coefficients are integers that do not depend on $R$.
  Continuing in this manner, we find that the cardinality of $S$ is a polynomial in $q$ whose coefficients are integers which do not depend on $R$.
\end{proof}
Proceeding exactly as in the proof of the Main Theorem (Theorem~\ref{theorem:main1}) we can obtain the following refinement:
\begin{theorem}
  [Main theorem]
  \label{theorem:main2}
    Let $R$ be a discrete valuation ring with finite residue field of order $q$.
  Fix $\lambda \in \Lambda_0$ and take $M$ as in (\ref{eq:module-form}).
  Let $G$ denote the group of $R$-module automorphisms of $M$.
  Fix order ideals $I,J\in J(P)_\lambda$ (and hence $G$-orbits $M^*_I$ and $M_J^*$ in $M$).
  \begin{enumerate}
  \item The cardinality of each $G$-orbit in $M^*_I\times M^*_J$ is a monic polynomial in $q$ whose coefficients are integers.
  \item Given a monic polynomial $\beta(q)$ with integer coefficients, the number of $G$-orbits in $M^*_I\times M^*_J$ of cardinality $\beta(q)$ is a polynomial in $q$ with coefficients that are integers which do not depend on $R$.
  \end{enumerate}
\end{theorem}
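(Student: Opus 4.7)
The plan is to follow the proof of Theorem~\ref{theorem:main1} line by line, refining every occurrence of ``the second component lies in $M$'' to ``the second component lies in $M^*_L$''. Part~(1) is nearly immediate: by Theorem~\ref{theorem:orbit-par}, every $G$-orbit in $M^*_I\times M^*_L$ contains a pair of the form $(m(I),m)$ with $m\in M^*_L$, and its size equals that of the $G_I$-orbit of $m$ in $M$, which by Theorem~\ref{theorem:cardinality-of-orbit} is one of the monic polynomials $\alpha_{I,J,K}(q)$ with integer coefficients independent of $R$.

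For part~(2), I would partition $M^*_L$ into the sets $X_{I,J,K,L}=X_{I,J,K}\cap M^*_L$ as $(J,K)$ ranges over $J(P)_{\lambda'/I}\times J(P)_{\lambda''}$, and apply Lemma~\ref{lemma:counting-parts}: on each $X_{I,J,K,L}$, Theorem~\ref{theorem:sub-orbit-order} guarantees that every $G_I$-orbit meeting it has cardinality exactly $\alpha_{I,J,K}(q)$, so that the number of $G$-orbits in $M^*_I\times M^*_L$ of cardinality $\alpha(q)$ is
\begin{equation*}
  N^L_\alpha(q) \;=\; \frac{1}{\alpha(q)}\sum_{\{(J,K)\mid \alpha_{I,J,K}(q)=\alpha(q)\}} |X_{I,J,K,L}|.
\end{equation*}
By Lemma~\ref{lemma:polynomiality} (with $s(q)=\alpha(q)$ monic), it then suffices to show that each $|X_{I,J,K,L}|$ is a polynomial in $q$ with integer coefficients that do not depend on $R$.

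To control $|X_{I,J,K,L}|$, I would use M\"obius inversion on the finite distributive lattice $J(P)_\lambda$ (in the variable $L$) to express it as an integer linear combination of cardinalities $|Y_{I,J,K,L'}|:=|\{m\in X_{I,J,K}\mid m\in M_{L'}\}|$ for ideals $L'\subset L$. This move is useful because $M_{L'}$ splits as $M'_{L'}\oplus M''_{L'}$ along the decomposition (\ref{eq:decomposition}), so the condition $m\in M_{L'}$ decouples the two coordinates. Combined with the defining requirement $I(m'')=K$, this forces $Y_{I,J,K,L'}=\emptyset$ unless $K\subset L'$, in which case
\begin{equation*}
  |Y_{I,J,K,L'}| \;=\; \#\{m'\in M'_{L'}\mid I(\bar m')=J\}\cdot |{M''_K}^*|.
\end{equation*}
The first factor is polynomial in $q$ with integer coefficients independent of $R$ by Lemma~\ref{lemma:image-orbit}, and the second by Theorem~\ref{theorem:cardinality-of-singleton-orbit}.

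The main obstacle in the original argument, namely the polynomiality of $|X_{I,J,K}|$, was already dispatched via Lemma~\ref{lemma:image-orbit}, whose proof proceeds coordinate by coordinate using only valuation conditions. Since the additional constraint $m\in M^*_L$ (or $M_{L'}$) is itself a conjunction of valuation inequalities on coordinates, it meshes cleanly with that coordinate-by-coordinate counting argument and requires no new combinatorial input. This is precisely why the paper can assert that the refinement ``proceeds exactly as before''.
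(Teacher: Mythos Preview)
Your argument is correct and follows the paper's approach exactly: Section~\ref{sec:module-mrmi} reduces via M\"obius inversion from $X_{I,J,K,L}$ to $Y_{I,J,K,L}$, factors the latter using the splitting $M_{L'}=M'_{L'}\oplus M''_{L'}$, and invokes Lemma~\ref{lemma:image-orbit} for the $M'$-factor, after which the counting goes through just as in Theorem~\ref{theorem:G_I-orbits-cardinalities} together with Lemma~\ref{lemma:polynomiality}. One small correction to your final paragraph: Lemma~\ref{lemma:image-orbit} was not used for the unrefined $|X_{I,J,K}|$ in Section~\ref{sec:stabilizer-orbit-an} (that quantity factors directly as $|Rm(I)|\cdot|(\bar M')^*_J|\cdot|{M''_K}^*|$); it is the genuinely new ingredient introduced in Section~\ref{sec:module-mrmi} specifically to handle the extra constraint $m'\in M'_L$.
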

If we are only interested in the number of orbits (and not the number of orbits of a given cardinality), Corollary~\ref{cor:independence of multiplicities} allows us to reduce any $\lambda\in \Lambda$ to $\lambda_2\in \Lambda_0$.
\begin{theorem}
  \label{theorem:general-lambda}
  Let $R$ be a discrete valuation ring with finite residue field of order $q$.
  Fix $\lambda \in \Lambda$ and take $M$ as in (\ref{eq:module-form}).
  Let $G$ denote the group of $R$-module automorphisms of $M$.
  Fix order ideals $I,J\in J(P)_\lambda$ (and hence $G$-orbits $M^*_I$ and $M_J^*$ in $M$).
  Then the number of $G$-orbits in $M_I^*\times M_J^*$ is given by a polynomial with coefficients that are integers which do not depend on $R$.
\end{theorem}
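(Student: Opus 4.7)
The plan is to deduce Theorem~\ref{theorem:general-lambda} from the finite-multiplicity version Theorem~\ref{theorem:main2} by invoking Corollary~\ref{cor:independence of multiplicities} together with the remark immediately following it, which asserts that the corollary remains valid when restricted to orbits of the form $M^*_I\times M^*_J$.

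First, I would note that the sublattice $J(P)_\lambda$ depends only on the support $\{\lambda_1,\dots,\lambda_l\}$ of the sequence $\lambda$, and not on the multiplicities $m_i$. In particular, if we set $\lambda^{(2)}=(\lambda_1^{\min(m_1,2)},\dots,\lambda_l^{\min(m_l,2)})\in\Lambda_0$, then $P_\lambda=P_{\lambda^{(2)}}$ and hence $J(P)_\lambda=J(P)_{\lambda^{(2)}}$. Thus the same pair of ideals $I,J\in J(P)_\lambda$ parametrizes orbits both in $M=M_\lambda$ and in the finite-length submodule $M_2=M_{\lambda^{(2)}}$.

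Next, the standard inclusion $M_2\hookrightarrow M$ restricts, under the identification of $G$-orbits in $M$ with ideals of $J(P)_\lambda$ via Theorem~\ref{theorem:orbit-par}, to carry the $G_2$-orbit $(M_2)^*_I$ into the $G$-orbit $M^*_I$ (the canonical elements $m(I)$ constructed in Section~\ref{sec:orbits-elements} lie in $M_1\subset M_2$). By Corollary~\ref{cor:independence of multiplicities} and the remark that follows it, the induced map
\begin{equation*}
G_2\backslash\bigl((M_2)^*_I\times (M_2)^*_J\bigr)\longrightarrow G\backslash\bigl(M^*_I\times M^*_J\bigr)
\end{equation*}
is a bijection. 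Therefore the number of $G$-orbits in $M^*_I\times M^*_J$ equals the number of $G_2$-orbits in $(M_2)^*_I\times (M_2)^*_J$.

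Finally, since $\lambda^{(2)}\in\Lambda_0$ is a genuine partition, Theorem~\ref{theorem:main2} applies to $M_2$ and to the pair $(I,J)\in J(P)_{\lambda^{(2)}}^2$: summing, over all monic $\beta(q)\in\Z[q]$, the polynomial count of $G_2$-orbits of cardinality $\beta(q)$ (of which only finitely many can be nonzero, since the total cardinality $|(M_2)^*_I\times(M_2)^*_J|$ is itself a polynomial in $q$), we obtain a polynomial in $q$ with integer coefficients independent of $R$ that counts the total number of $G_2$-orbits in $(M_2)^*_I\times (M_2)^*_J$. Combining with the bijection above yields the desired statement. No step presents a real obstacle; the only substantive points to check are the compatibility of the reduction in Corollary~\ref{cor:independence of multiplicities} with the orbit decomposition (which is exactly what the remark ensures) and the equality $J(P)_\lambda=J(P)_{\lambda^{(2)}}$ (which is immediate from the definition of $P_\lambda$).
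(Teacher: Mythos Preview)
Your proposal is correct and follows precisely the route the paper takes: the paper simply remarks (in the sentence immediately preceding the theorem) that Corollary~\ref{cor:independence of multiplicities} reduces an arbitrary $\lambda\in\Lambda$ to $\lambda^{(2)}\in\Lambda_0$, after which Theorem~\ref{theorem:main2} applies. You have merely spelled out the details of this reduction (the equality $J(P)_\lambda=J(P)_{\lambda^{(2)}}$, the use of the Remark after the corollary to restrict to $M^*_I\times M^*_J$, and the summation over $\beta$), all of which are exactly as intended.
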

\section{Relation to representations of quivers}
\label{sec:relat-repr-quiv}
Consider the quiver $Q$ represented by
\begin{equation*}
  \begin{tikzpicture}
    [->,shorten >=1pt,auto,node distance=3cm,
  thick,main node/.style={circle,draw}]

  \node[main node] (1) {$1$};
  \node[main node] (2) [right of=1] {$2$};

  \path[every node/.style={font=\sffamily\small}]
    (1) edge [loop left] node {$\tilde A$} (1)
    (2) edge [bend right] node {$\tilde x$} (1)
    (2) edge [bend left] node {$\tilde y$} (1);
  \end{tikzpicture}
\end{equation*}
To an $n\times n$ matrix $A$ and two $n$-vectors $x$ and $y$ (all with coordinates in a finite field $\Fq$), we may associate a representation of this quiver with dimension vector $(n,1)$ by taking $V_1 = \Fq^n$, $V_2=\Fq$, the linear map corresponding to the arrow $\tilde A$ given by $A$, the linear maps corresponding to the arrows $\tilde x$ and $\tilde y$ being those which take the unit in $V_2=\Fq$ to the vectors $x$ and $y$ respectively.
The representations corresponding to triples $(A,x,y)$ and $(A,x',y')$ are isomorphic if and only if there exists an element $g\in GL_n(\Fq)$ such that
\begin{equation*}
  gAg\inv = A,\:gx =x',\: gy = y'.
\end{equation*}
Thus, the isomorphism classes of representations of $Q$ are in bijective correspondence with triples $(A,x,y)$ consisting of an $n\times n$ matrix and two $n$-vectors up to a simultaneous change of basis.

If we view $k^n$ as a $k[t]$-module $M^A$ where $t$ acts via the matrix $A$, then the number of isomorphism classes of representations of the form $(A,x,y)$ with $A$ fixed may be interpreted as the number of $G^A =\Aut_{k[t]}M$-orbits in $M^A\times M^A$.
Suppose that $k$ is a finite field of order $q$.
The total number of isomorphism classes of representations of $Q$ with dimension vector $(n,1)$ is given by
\begin{equation}
  \label{eq:quiver-reps-and-orbits}
  R_{n,1}(q) = \sum_A |G^A\backslash M^A\times M^A|,
\end{equation}
where $A$ runs over a set of representatives for the similarity classes in $M_n(k)$.
This polynomial was introduced by Kac in \cite{kac1983root}, where he asserted that for any quiver, the number of isomorphism classes of representations with a fixed dimension vector is a polynomial in $q$ with integer coefficients.
He conjectured the non-negativity of a related polynomial (which counts the number of isomorphism classes of absolutely indecomposable representations) from which the non-negativity of coefficients of $R_{n,1}(q)$ follows (see Hua \cite{hua}).
Kac's conjecture was proved by Hausel, Letellier and Rodriguez-Villegas \cite{HLRAnnals} recently.

We now explain how the results of this paper (together Green's theory of types of matrices \cite{MR0072878}) enable us to compute the right hand side of (\ref{eq:quiver-reps-and-orbits}).
Let $\irr \Fq[t]$ denote the set of irreducible monic polynomials in $\Fq[t]$.
Let $\Lambda_0$ denote the subset of $\Lambda$ consisting of element $\lambda$ of type (\ref{eq:lambda}) for which all the cardinals $m_i$ are finite (this is just the set of all partitions).
For $\lambda\in \Lambda_0$ as in (\ref{eq:lambda}), let $|\lambda| = \sum m_i\lambda_i$.
Recall that similarity classes of $n\times n$ matrices with entries in $\Fq$ are parametrized by functions:
\begin{equation*}
  c: \irr \Fq[t]\to \Lambda_0
\end{equation*}
such that
\begin{equation*}
  \sum_{f\in \irr \Fq[t]} \deg f |c(f)| = n
\end{equation*}
The above condition imposes the constraint that $c(f)$ is the empty partition $\emptyset$ with $|\emptyset|=0$ for all but finitely many $f\in \irr \Fq[t]$.
The similarity classes parametrized by $c$ and $c'$ are said to be of the same type if there exists a degree-preserving bijection $\sigma:\irr \Fq[t]\to\irr \Fq[t]$ such that $c'=c\circ \sigma$.

Given a function $c:\irr \Fq[t]\to \Lambda$ parametrizing a similarity class of $n\times n$ matrices, let $\tau_c$ denote the multiset  of pairs $(c(f),\deg f)$ as $f$ ranges over the set of irreducible polynomials in $\Fq[t]$ for which $c(f)\neq \emptyset$.
Then $c$ and $c'$ are of the same type if and only if $\tau_c=\tau_{c'}$.
Thus, the set of types of $n\times n$ matrices with entries in $\Fq$ is parametrized by multisets of the form
\begin{equation}
  \label{eq:type-multiset}
  \tau = \{(\lambda^{(1)},d_1)^{a_1}, (\lambda^{(2)},d_2)^{a_2},\dotsc\}
\end{equation}
such that
\begin{equation}
  \label{eq:types}
  \sum_i a_id_i|\lambda^{(i)}| = n.
\end{equation}
Let $T(n)$ denote the set of multisets of pairs in $\Lambda_0\times \mathbf \Z_{>0}$ satisfying (\ref{eq:types}).
For example, for $T(2)$ has four elements given by:
\begin{itemize}
\item $\{((1,1),1)\}$ (central type)
\item $\{((2),1)\}$ (non-semisimple type)
\item $\{((1),2)\}$ (irreducible type)
\item $\{((1),1)^2\}$ (split regular semisimple type)
\end{itemize}

If $A$ is an $n\times n$ matrix of type $\tau$ as in (\ref{eq:type-multiset}), then by primary decomposition,
\begin{equation*}
  n_\tau(q) = |G^A\bsl M^A\times M^A| = \prod_i n_{\lambda^{(i)}}(q^{d_i}),
\end{equation*}
where $n_\lambda(q)$ is denotes the cardinality of $|G_\lambda\backslash M_\lambda\times M_\lambda|$ when the residue field of $R$ has cardinality $q$.
It is also easy to enumerate the number of similarity classes of a given type $\tau$: for each positive integer $d$, let $m_d$ denote the number of times a pair of the form $(\lambda,d)$ occurs in $\tau$ (counted with multiplicity).
Let $\Phi_d(q)$ denote the number of irreducible polynomials in $\Fq[t]$ of degree $d$ in $\Fq[t]$.
This is a polynomial in $q$ with rational coefficients:
\begin{equation*}
  \Phi_d(q) = \frac 1d \sum_{e|d} \mu(d/e)q^e,
\end{equation*}
where $\mu$ is the classical M\"obius function.
The number of similarity classes of type $\tau$ is
\begin{equation*}
  c_\tau(q) = \frac 1{\prod_i a_i!}\prod_d \Phi_d(q)(\Phi_d(q)-1)\dotsb (\Phi_d(q)-m_d+1).
\end{equation*}
We obtain a formula for $R_{n,1}(q)$:
\begin{equation}
  \label{eq:Rn1}
  R_{n,1}(q) = \sum_{\tau \in T(n)} c_\tau(q)n_\tau(q),
\end{equation}
which can also be expressed as a product expansion for the generating function of $R_{n,1}(q)$ in the spirit of Kung \cite{kung1981cycle} and Stong \cite{stong1988some}:
\begin{equation}
  \label{eq:gen-func}
  \sum_{n=0}^\infty R_{n,1}(q)x^n = \prod_{d=1}^\infty \left(\sum_{\lambda\in \Lambda_0} n_\lambda(q^d)x^{d|\lambda|}\right)^{\Phi_d(q)}.
\end{equation}

There is an alternative method for computing $R_{n,1}(q)$, namely the Kac-Stanley formula \cite[Page 90]{kac1983root}, which is based on Burnside's lemma and a theory of types adapted to quivers (this formula is a way to compute the number of isomorphism classes of representations of a quiver with any dimension vector).
A comparison of the values obtained for $R_{n,1}(q)$ using these two substantially different methods verifies the validity of our results.
This has been carried out by computer for values of $n$ up to $18$ (the code for this can be found at \url{http://www.imsc.res.in/~amri/pairs/}).
\bibliographystyle{abbrv}
\bibliography{refs}
\end{document}